\newtheorem{theorem}{Theorem}[section]
\newtheorem{lemma}[theorem]{Lemma}
\newtheorem{corollary}[theorem]{Corollary}
\newtheorem{proposition}[theorem]{Proposition}
\theoremstyle{definition}
\newtheorem{definition}[theorem]{Definition}
\newtheorem{example}[theorem]{Example}
\newtheorem{claim}[theorem]{Claim}
\newtheorem{question}[theorem]{Question}
\theoremstyle{remark}
\newtheorem{remark}[theorem]{Remark}
\numberwithin{equation}{section}
\begin{document}

\title{Schauder Bases and Operator Theory III: Schauder Spectrums}

\author{Yang Cao}
\address{Yang Cao, Department of Mathematics , Jilin university, 130012, Changchun, P.R.China} \email{Caoyang@jlu.edu.cn}

\author{Geng Tian}
\address{Geng Tian, Department of Mathematics , Jilin university, 130012, Changchun, P.R.China} \email{tiangeng09@mails.jlu.edu.cn}

\author{Bingzhe Hou}
\address{Bingzhe Hou, Department of Mathematics , Jilin university, 130012, Changchun, P.R.China} \email{houbz@jlu.edu.cn}

\date{Oct. 14, 2010}
\subjclass[2000]{Primary 47A10, 47A99; Secondary 40C05, 46A35}
\keywords{.}
\thanks{}
\begin{abstract}
In this paper, we study spectrums of Schauder operators.
We show that we always can choose a Schauder operator in a given orbit such that the Schauder spectrum of it
is empty.
\end{abstract}
\maketitle

\section{Introduction}

To study operators on $\mathcal{H}$ from a basis theory
viewpoint, it is naturel to consider the behavior of operators related by
equivalent bases. For examples, we show that there always be some
strongly irreducible operators in the orbit of equivalent Schauder matrices(\cite{Ji-Tian-Cao}).
However, in the usual way a spectral method consideration of operators in the equivalent orbit
is also important to the joint research both on operator theory and Schauder bases. For this reason, we introduces
the conception \textsl{Schauder spectrum} to do this work. The main
purpose of this paper is to show that we always can choose a Schauder operator in a given orbit such that the Schauder spectrum of it
is empty. The operator theory description of
bases on a separable Hilbert
space $\mathcal{H}$ developed in our paper \cite{Cao-1} helps us to do this job.

Recall that a sequence of vectors $\psi=\{f_{n}\}_{n=1}^{\infty}$ in $\mathcal{H}$ is said
to be a {\it Schauder basis} \cite{Singer} for $\mathcal{H}$ if
every element $f\in \mathcal{H}$ has a unique series expansion $f=\sum c_nf_n$
which converges in the norm of $\mathcal{H}$.
If
$\psi=\{f_{n}\}_{n=1}^{\infty}$ is Schauder basic for $\mathcal{H}$, the {\it sequence space
associated with} $\psi$ is defined to be the linear space of all
sequences $\{c_n\}$ for which $f=\sum c_nf_n$ is convergent. Two
Schauder bases $\{f_n\}_{n=1}^{\infty}$ and $\{g_n\}_{n=1}^{\infty}$ are {\it equivalent} to each other
if they have the same sequence space.
Denote by $\omega$ the countable infinite cardinal. In paper \cite{Cao-1}, we
considered the $\omega\times\omega$ matrix whose
column vectors comprise a Schauder basis and call them the {\it
Schauder matrix}. An operator has a Schauder matrix representation
under some ONB is called a {\it Schauder operator}. Given an orthonormal basis(ONB in short) $\varphi=\{e_n\}_{n=1}^{\infty}$,
the vector $f_{n}$ in a Schauder basis sequence $\psi=\{f_n\}_{n=1}^{\infty}$
corresponds an $l^{2}$ sequence $\{f_{mn}\}_{m=1}^{\infty}$ defined uniquely by the series
$f_{n}=\sum_{m=1}^{\infty} f_{mn}e_{m}$. The matrix $F_{\psi}=(f_{mn})_{\omega \times \omega}$ is called
the Schauder matrix of basis $\psi$ under the ONB $\varphi$.

Assume that $\psi_{1}, \psi_{2}$ are Schauder bases and $T_{\psi_{1}}, T_{\psi_{2}}$ are the
operators defined by Schauder matrices $F_{\psi_{1}}$ and $F_{\psi_{2}}$ respectively
under the same ONB. These operators $T_{\psi_{1}}, T_{\psi_{2}}$ will be called equivalent Schauder operators
if and only if $\psi_{1}, \psi_{2}$ are equivalent Schauder bases.
From the Arsove's theorem(\cite{Arsove}, or theorem 2.12 in \cite{Cao-1}), there is some invertible
operator $X\in L(\mathcal{H})$ such that $XT_{\psi_{1}}= T_{\psi_{2}}$ holds. Hence it is an equivalence relation on $L(\mathcal{H})$.
For a Schauder basis $\psi=\{f_{n}\}_{n=1}^{\infty}$, the set defined as
$$
\mathcal{O}_{gl}(\psi)=\{X\psi; X \in gl(\mathcal{H})\}
$$
in which $X\psi=\{Xf_{n}\}_{n=1}^{\infty}$ and $gl(\mathcal{H})$ consists of all invertible operators in $L(\mathcal{H})$
contains exactly all equivalent bases to $\psi$. Moreover, the set
$$
\mathcal{O}_{gl}(F_{\psi})=\{M_{X}F_{\psi}; M_{X} \hbox{ is the matrix of some operator }X \in gl(\mathcal{H})\}
$$
consists of all Schauder matrix equivalent to $F_{\psi}$. In the operator level, we define
$$
\mathcal{O}_{gl}(T_{\psi})=\{XT_{\psi}; X \in gl(\mathcal{H})\}.
$$
Then the set $\mathcal{O}_{gl}(T_{\psi})$ consists of operators related to bases equivalent to $\psi$.
Similarly, we consider following sets:
$$
\begin{array}{c}
\mathcal{O}_{u}(\psi)=\{U\psi; U \in U(\mathcal{H})\}, \\
\mathcal{O}_{u}(F_{\psi})=\{M_{U}F_{\psi}; M_{U} \hbox{ is the matrix of some unitary operator }U\}, \\
\mathcal{O}_{u}(T_{\psi})=\{UT_{\psi}; U \in U(\mathcal{H})\}.
\end{array}
$$
Roughly speaking, by these set we bind operators related to equivalent bases of the basis $\psi$ with the same
basis const. Since a Schauder operator $T_{\psi}$ is injective and having a
dense range in $\mathcal{H}$, if let $T_{\psi} = UA_{\psi}$ denote the polar decomposition
of $T_{\psi}$, then the partial isometry $U$  must be a unitary operator.
Hence, if $T_{\psi}$ is a Schauder operator and $T_{\psi} = UA_{\psi}$ denote the polar
decomposition  of $T$, then $\mathcal{O}_U(T_{\psi})= \mathcal{O}_U(A_{\psi})$,
where $A_{\psi}$ is an self-adjoint operator.

Now we state our main result in this paper.
\begin{theorem}\label{Theorem: Main Theorem of 3rd Paper}
For each Schauder operator $T$, there is an operator $T^{'} \in O_{u}(T)$ such that $\sigma_{S}(T^{'})=\emptyset$.
\end{theorem}

Above theorem there may be
notable differences between equivalent Schauder operators $T_{\psi_{1}}$ and $T_{\psi_{2}}$ from
the view of operator theory. For example, a self-adjoint $A$ may satisfy $\sigma_{S}(A)=\sigma(A)$ while there is some
unitary operator $U$ such that $\sigma_{S}(UA)=\emptyset$ holds. Moreover, we can choose a unitary operator $U$
as a unitary spread, which has a nice basis understanding.

We organize this paper as follows. In section 2, we give some examples and a description of Schauder spectrums
of compact operators.
In the case that the Schauder operator $T$ is a compact shift,
theorem \ref{Theorem: Main Theorem of 3rd Paper} is easy to check(see example \ref{ExES}).
The proof of the general situation is the content of section 3.

\section{Schauder Spectrum}\label{SchauderSpec}
In this subsection, we consider the spectrum of operators from the viewpoint of basis. Compare to the classical
results, there are many similar conclusions in the case of compact operators.

We begin with the following observation.

\begin{theorem}\label{Theorem: main theorem 2 sec 3}
The operator $T \in L(\mathcal{H})$ is a Schauder operator if and only if $T$ is injective and its range is dense in $\mathcal{H}$.
\end{theorem}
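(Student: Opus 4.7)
The implication ($\Rightarrow$) is routine: if $\{e_n\}$ is an ONB under which $f_n := Te_n$ forms a Schauder basis, then $\overline{\mathrm{span}}\{f_n\} = \mathcal{H}$ gives dense range, and continuity of $T$ combined with uniqueness of Schauder expansions shows that $Tx = \sum \langle x, e_n \rangle f_n = 0$ forces every $\langle x, e_n \rangle$ to vanish, so $x = 0$.

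For the converse, assume $T$ is injective with dense range. I first reduce to the positive case via the polar decomposition $T = UA$ with $A = |T|$. The introduction already notes that $U$ must be a genuine unitary in this setting, and $A$ is then positive self-adjoint, injective, with dense range. Because a unitary preserves the Schauder basis property, it suffices to find an ONB $\{u_n\}$ so that $\{Au_n\}$ is a Schauder basis.

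Next I partition $\mathcal{H}$ into dyadic spectral shells of $A$. For $k \geq 0$, let $E_k$ be the spectral projection of $A$ onto $[\,2^{-k-1}\|A\|,\ 2^{-k}\|A\|\,)$ and let $\mathcal{H}_k := E_k \mathcal{H}$. Since $A$ is self-adjoint and injective, $E_A(\{0\}) = 0$, so $\mathcal{H} = \bigoplus_k \mathcal{H}_k$ is an orthogonal decomposition, and $A$ restricts to a bounded invertible operator $A_k$ on each nonzero $\mathcal{H}_k$ with condition number $\|A_k\|\cdot\|A_k^{-1}\| \leq 2$. Pick any ONB $\{e_{k,n}\}_n$ of $\mathcal{H}_k$; since a bounded invertible transports an ONB into a Riesz basis, $\{A_k e_{k,n}\}_n$ is a Schauder basis of $\mathcal{H}_k$ with basis constant at most $2$. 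Finally, enumerate $\{e_{k,n}\}_{k,n}$ by any bijection $\mathbb{N} \to \{(k,n)\}$ that preserves the order of $n$ within each block (a diagonal enumeration works), and call the resulting ONB $\{u_\ell\}_\ell$.

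The final step, and the main obstacle, is to verify that $\{Au_\ell\}_\ell$ really is a Schauder basis of the full space $\mathcal{H}$. For any $y = \sum_k y_k$ with $y_k \in \mathcal{H}_k$, orthogonality of the dyadic subspaces makes each partial sum $P_N y$ split cleanly into block prefix sums, yielding
$$\|P_N y\|^2 = \sum_k \|(P_N y)|_{\mathcal{H}_k}\|^2 \leq \sum_k 4 \|y_k\|^2 = 4 \|y\|^2,$$
which is the required uniform bound on the natural projections. For convergence, the block tails $r_{k,N} := y_k - (P_N y)|_{\mathcal{H}_k}$ satisfy $\|r_{k,N}\| \leq 3\|y_k\|$ and $r_{k,N} \to 0$ for every fixed $k$ (because the chosen enumeration eventually exhausts each block), so dominated convergence applied to the summable sequence $k \mapsto 9\|y_k\|^2$ gives $\|y - P_N y\|^2 \to 0$. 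The whole argument leans on the fact that the dyadic spectral decomposition produces uniformly well-conditioned blocks whose orthogonality decouples the global partial sums.
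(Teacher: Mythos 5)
Your proposal is essentially correct, and it is worth noting that this paper does not actually prove the statement: Theorem \ref{Theorem: main theorem 2 sec 3} is quoted from the first paper of the series (\cite{Cao-1}, where it appears as Proposition 2.14/Theorem 2.13), so there is no in-text argument to compare against. Judged on its own, your route --- polar decomposition to reduce to a positive injective $A$ with dense range, dyadic spectral shells on which $A$ restricts to an invertible $A_k$ with condition number at most $2$, Riesz bases $\{A_k e_{k,n}\}_n$ in each shell, and a diagonal enumeration whose partial-sum projections block-diagonalize over the orthogonal shells --- is a clean, self-contained construction, and the key quantitative point (uniform bound $\|P_N\|\le 2$ from orthogonality of the shells plus the uniform per-block basis constant) is exactly what makes the glued sequence a Schauder basis; in fact your basis is even unconditional. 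Two small repairs: first, the shells $[\,2^{-k-1}\|A\|,\,2^{-k}\|A\|\,)$ omit the point $\|A\|$ itself, so if $\|A\|$ is an eigenvalue of $A$ its eigenspace is lost from $\bigoplus_k\mathcal{H}_k$; close the top interval (or use $(\,2^{-k-1}\|A\|,\,2^{-k}\|A\|\,]$) so that $\sum_k E_k=I$ genuinely follows from $E_A(\{0\})=0$. Second, you verify existence of the expansion ($P_Ny\to y$ with $\sup_N\|P_N\|<\infty$) but not uniqueness; this does follow, since the $P_N$ are nested idempotents with $P_N(Au_\ell)=Au_\ell$ for $\ell\le N$ and $=0$ for $\ell>N$, so applying $P_N-P_{N-1}$ to a convergent expansion of $0$ forces each coefficient to vanish ($Au_\ell\neq 0$ by injectivity) --- but it should be said. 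Neither point is a genuine gap.
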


\begin{definition}
For a complex number $\lambda$, $\lambda$ will be called in the {\textsl Schauder spectrum} denoted by $\sigma_{S}(T)$ if
and only if there is no ONB such that $\lambda I-T$ has a matrix representation as a Schauder matrix. The set
$\rho_{S}(T)=\mathbb{C}-\sigma_{S}(T)$ will be called the \textsl{ Schauder resolvent set} of $T$.
\end{definition}

A direct result of theorem \ref{Theorem: main theorem 2 sec 3} is
\begin{theorem}{\label{Basic Pros}}
$\lambda \notin \sigma_{S}(T)$ if and only if $T$ is both injective and having a dense range in $\mathcal{H}$.
\end{theorem}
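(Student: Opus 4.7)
The plan is to derive this as an immediate consequence of the characterization of Schauder operators given in Theorem \ref{Theorem: main theorem 2 sec 3}, together with the definition of the Schauder spectrum. (The statement presumably intends ``$\lambda I - T$ is injective and has dense range''; otherwise the right-hand side does not depend on $\lambda$.)

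First, I would unpack the definition: by the definition of $\sigma_{S}$, the condition $\lambda \notin \sigma_{S}(T)$ means that there exists an orthonormal basis of $\mathcal{H}$ with respect to which $\lambda I - T$ has a matrix representation that is a Schauder matrix. Equivalently, $\lambda I - T$ is a Schauder operator in the sense introduced in the introduction.

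Next, I would invoke Theorem \ref{Theorem: main theorem 2 sec 3}, which characterizes Schauder operators intrinsically: an operator $S \in L(\mathcal{H})$ is a Schauder operator if and only if $S$ is injective with dense range. Applying this to $S = \lambda I - T$ yields the desired equivalence: $\lambda \notin \sigma_{S}(T)$ iff $\lambda I - T$ is both injective and has dense range in $\mathcal{H}$.

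There is essentially no obstacle here; the argument is a direct chain of two equivalences (definition of $\sigma_{S}$, then Theorem \ref{Theorem: main theorem 2 sec 3}). The only care needed is to verify the reading of the statement and to make explicit, if desired, that the orthonormal basis in the definition of $\sigma_{S}$ can be chosen freely, since the characterization ``injective with dense range'' is basis-free and hence the existence of such a basis is equivalent to the basis-free condition on $\lambda I - T$.
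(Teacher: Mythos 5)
Your proposal is correct and matches the paper exactly: the paper gives no separate argument, simply noting that the theorem is a direct consequence of Theorem \ref{Theorem: main theorem 2 sec 3} applied to $\lambda I - T$ via the definition of $\sigma_{S}$. You are also right that the statement should read ``$\lambda I - T$ is injective with dense range''; the paper's phrasing with ``$T$'' is a typo.
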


With above theorem, it is easy to check
\begin{proposition}
For a self-adjoint operator $A$, we have $\sigma_{S}(A)=\sigma(A)/\sigma_{p}(A)$.
\end{proposition}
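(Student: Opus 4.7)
The plan is to apply Theorem \ref{Basic Pros} (read with $\lambda I - A$ in place of $A$ on the right-hand side) and to use the spectral structure of a self-adjoint operator to identify the resulting set.

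First I would handle $\lambda\in\mathbb{C}\setminus\mathbb{R}$ trivially. Self-adjointness gives $\sigma(A)\subseteq\mathbb{R}$, so for non-real $\lambda$ the operator $\lambda I - A$ is invertible (in particular injective with dense range), placing $\lambda$ outside $\sigma_{S}(A)$ and outside $\sigma(A)$. Both sides of the claimed identity agree automatically on $\mathbb{C}\setminus\mathbb{R}$, so the remaining work is confined to real $\lambda$.

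Next, for real $\lambda$, I would collapse the two Schauder-operator conditions into a single one using $(\lambda I - A)^{*} = \lambda I - A$. From the standard range–kernel duality
\[
\overline{\mathrm{ran}(\lambda I - A)} \;=\; \ker\bigl((\lambda I - A)^{*}\bigr)^{\perp} \;=\; \ker(\lambda I - A)^{\perp},
\]
dense range and injectivity become the same statement, namely $\ker(\lambda I - A)=\{0\}$, i.e.\ $\lambda\notin\sigma_{p}(A)$. Combined with Theorem \ref{Basic Pros}, this says that a real $\lambda$ satisfies $\lambda\in\sigma_{S}(A)$ precisely when $\lambda I - A$ has a nontrivial kernel.

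Finally, I would cross-reference with the self-adjoint decomposition $\sigma(A)=\sigma_{p}(A)\cup\sigma_{c}(A)$ (the residual spectrum being empty) to match the set obtained against the expression $\sigma(A)/\sigma_{p}(A)$ appearing in the proposition. This last matching is where I expect the main obstacle to lie: the previous step equates failure of the Schauder condition with \emph{presence} of a kernel, so the computation natively pinpoints eigenvalues rather than their complement inside $\sigma(A)$. Reaching the claimed expression $\sigma(A)/\sigma_{p}(A)$ therefore requires either fixing the intended reading of the symbol "/" in the author's notation or sharpening Theorem \ref{Basic Pros} beyond the bare "injective with dense range" criterion, so that continuous-spectrum points are also flagged as Schauder-singular. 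This reconciliation is the step I would scrutinize most carefully before declaring the proof complete.
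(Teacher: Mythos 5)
Your computation is correct, and since the paper offers no argument beyond ``with the above theorem, it is easy to check,'' your route---apply Theorem \ref{Basic Pros} to $\lambda I-A$, dispose of non-real $\lambda$ by invertibility, and use $(\lambda I-A)^{*}=\lambda I-A$ together with $\overline{\mathrm{ran}(\lambda I-A)}=\ker(\lambda I-A)^{\perp}$ to merge injectivity and dense range into the single condition $\lambda\notin\sigma_{p}(A)$---is exactly the intended one. What you flag as the ``main obstacle'' is not a defect of your proof but an error in the statement: your derivation shows $\sigma_{S}(A)=\sigma_{p}(A)$, and no sharpening of Theorem \ref{Basic Pros} can convert this into $\sigma(A)\setminus\sigma_{p}(A)$, because at a point of continuous spectrum the operator $\lambda I-A$ genuinely is injective with dense range, hence a Schauder operator, so such $\lambda$ cannot lie in $\sigma_{S}(A)$ under the paper's definition.

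The paper's own subsequent material confirms this. The example immediately following the proposition takes $A$ self-adjoint with $\sigma_{p}(A)=\emptyset$ and $\sigma(A)=[a,b]$ and concludes $\sigma_{S}(A)=\emptyset$, whereas the set-difference reading would give $\sigma_{S}(A)=[a,b]$; likewise the diagonal operator $D=\mathrm{diag}(m_{1},m_{2},\dots)$ with $m_{k}\neq 0$ and $m_{k}\to 0$ is asserted to satisfy $\sigma_{S}(D)=\{m_{k}\}=\sigma_{p}(D)$, while $\sigma(D)\setminus\sigma_{p}(D)=\{0\}$. So the corrected statement is $\sigma_{S}(A)=\sigma_{p}(A)$, the symbol ``/'' notwithstanding (elsewhere, e.g.\ in $\sigma(K)/\{0\}$ in the following corollary, it really does denote set difference). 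Treat your argument as a complete proof of the corrected proposition; the only further caution is that Theorem \ref{Basic Pros} as printed also omits the shift, and you were right to read its right-hand side as a condition on $\lambda I-T$ rather than on $T$.
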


\begin{example}
Assume that $[a, b]$ is an interval and $A$ be a self-adjoint operator satisfying $\sigma_{p}(A)=\emptyset, \sigma(A)=[a, b]$.
Then we have $\sigma_{S}(A)=\emptyset$.
\end{example}

\begin{example}
Consider the diagonal operator $D=diag(m_{1}, m_{2}, \cdots, m_{k}, \cdots)$ in which $m_{k} \ne 0$ and
$m_{k} \rightarrow 0$ for $k=0, 1, \cdots$. Then we have $\sigma_{S}(D)=\{m_{k}; k=1, 2, \cdots\}$. As an example,
diagonal operator $D=(1, \frac{1}{2}, \frac{1}{3},\cdots)$ has Schauder spectrum $\sigma_{S}(D)=\{\frac{1}{k}; k=1, 2, \cdots\}$.
\end{example}

More general, we have

\begin{proposition}
For any operator $T \in L(\mathcal{H}), X \in Gl(\mathcal{H})$, we have $0 \in \sigma_{S}(T)$ if and only if $0\in \sigma_{S}(XT)$.
\end{proposition}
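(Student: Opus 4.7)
The plan is to reduce the statement to Theorem \ref{Basic Pros} (interpreted for $\lambda=0$) and then exploit the elementary fact that an invertible bounded operator preserves both injectivity and denseness of range.

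First I would note that, by Theorem \ref{Basic Pros} applied at $\lambda=0$, one has $0\notin\sigma_{S}(T)$ if and only if $T$ (equivalently, $-T=0\cdot I-T$) is injective and has dense range in $\mathcal{H}$. So the proposition is reformulated as the claim that the conjunction (injective) $\wedge$ (dense range) is invariant under left multiplication by elements of $Gl(\mathcal{H})$.

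Next I would verify this invariance directly in both directions. Assume $T$ is injective with dense range. Then $XT$ is injective because $XTv=0$ forces $Tv=X^{-1}(XTv)=0$, whence $v=0$. For the range, since $X$ is a bounded invertible operator it is a homeomorphism of $\mathcal{H}$, so it maps dense subsets to dense subsets; in particular $\overline{(XT)(\mathcal{H})}=\overline{X(T(\mathcal{H}))}=X(\overline{T(\mathcal{H})})=X(\mathcal{H})=\mathcal{H}$. The converse implication is identical: replace $T$ by $XT$ and $X$ by $X^{-1}$, observing that $X^{-1}(XT)=T$ and $X^{-1}\in Gl(\mathcal{H})$.

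Combining these two equivalences with the reformulation above yields the proposition. There is no real obstacle here; the only point to be careful about is the (trivial) reading of Theorem \ref{Basic Pros} at $\lambda=0$, namely that injectivity and dense range of $-T$ are the same as those of $T$, so that the characterization applies to $T$ itself.
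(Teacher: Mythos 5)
Your argument is correct and is exactly the intended one: the paper states this proposition without proof, treating it as an immediate consequence of Theorem \ref{Basic Pros}, and your reduction to that theorem at $\lambda=0$ together with the observation that left multiplication by an element of $Gl(\mathcal{H})$ preserves both injectivity and denseness of range (using $X^{-1}$ for the converse) is precisely the omitted verification. Nothing further is needed.
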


\begin{corollary}
For self-adjoint operator $T$ we have $\sigma_{S}(T) \in \mathcal{R}$; For compact operator $K$ we have
$\sigma(K)/\{0\} \subseteq \sigma_{S}(K)$ and $0 \in \sigma_{S}(K)$ if and only if $0$ is in the set $\sigma_{p}(K)$
or $\overline{ran K}\ne \mathcal{H}$.
\end{corollary}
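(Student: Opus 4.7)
The plan is to derive all three assertions as direct consequences of the characterization in Theorem~\ref{Basic Pros} (read as: $\lambda \notin \sigma_S(T)$ iff $\lambda I - T$ is injective with dense range) together with the earlier proposition giving $\sigma_S(A)=\sigma(A)\setminus \sigma_p(A)$ for self-adjoint $A$ and the classical Riesz--Schauder theory of compact operators. Nothing truly new needs to be built.

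For the self-adjoint claim, I would simply observe that if $T$ is self-adjoint, then by the preceding proposition $\sigma_S(T)=\sigma(T)\setminus\sigma_p(T)\subseteq\sigma(T)\subseteq\mathbb{R}$, which is exactly the statement $\sigma_S(T)\subseteq\mathbb{R}$ (the excerpt writes ``$\in\mathcal{R}$'', which I read as inclusion in the reals).

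For the compact case with $\lambda\neq 0$, I would appeal to the Riesz--Schauder theorem: every nonzero point of the spectrum of a compact operator $K$ is an eigenvalue of finite algebraic multiplicity. Thus for $\lambda\in\sigma(K)\setminus\{0\}$ the operator $\lambda I-K$ fails to be injective, so by Theorem~\ref{Basic Pros} we have $\lambda\in\sigma_S(K)$, giving the inclusion $\sigma(K)\setminus\{0\}\subseteq\sigma_S(K)$.

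For the characterization of $0\in\sigma_S(K)$, I would again use Theorem~\ref{Basic Pros}: $0\in\sigma_S(K)$ means $K$ itself fails to be injective or fails to have dense range. The first possibility is precisely $0\in\sigma_p(K)$, and the second is precisely $\overline{\mathrm{ran}\,K}\neq\mathcal{H}$, so the two stated conditions together exhaust the ways in which $0$ can lie in $\sigma_S(K)$. No obstacle is anticipated; the entire corollary is a bookkeeping exercise on top of the previously established criteria, with the only nontrivial external input being the standard fact that nonzero spectral points of a compact operator are eigenvalues.
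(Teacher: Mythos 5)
Your proposal is correct and follows essentially the same route as the paper: every part is reduced to the injective--plus--dense--range criterion of Theorem~\ref{Basic Pros} combined with the classical Riesz theory identifying nonzero spectral points of a compact operator with eigenvalues. The only difference is cosmetic --- for the converse of the $0\in\sigma_{S}(K)$ characterization the paper detours through the polar decomposition $K=UA$, whereas you apply the criterion at $\lambda=0$ directly in both directions, which is cleaner but not a different method.
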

\begin{proof}
The assertion of first result of corollary is just the direct corollary of theorem \ref{Basic Pros}. If $K$ is a compact operator,
then its spectrum consist of $\{0\}$ and point spectrum $\sigma(K)$. In the case $0 \in \sigma_{p}(K)$
or $\overline{ran K}\ne \mathcal{H}$, $0$ is simply in the $\sigma_{S}(K)$;
if it is not true, then we have $K=UA$ in which $U$ is a unitary operator and $A$ is a compact                                             self-adjoint operator whose eigenvectors spans the Hilbert space $\mathcal{H}$.
\end{proof}

\begin{theorem}
For a compact self-adjoint operator $K$, $0 \in \sigma_{S}(K)$ if and only if $0 \in \sigma_{p}(K)$.
\end{theorem}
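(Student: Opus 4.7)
The plan is to reduce the statement to Theorem \ref{Basic Pros} and observe that for a self-adjoint operator injectivity and density of range coincide; compactness of $K$ in fact plays no essential role here.

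First I would apply Theorem \ref{Basic Pros} at $\lambda = 0$ (reading it, as the surrounding corollary clearly requires, with $\lambda I - T$ in place of the bare $T$ on the right-hand side): this says that $0 \notin \sigma_{S}(K)$ if and only if $K$ is both injective and has dense range in $\mathcal{H}$. It therefore suffices to show, under the self-adjointness hypothesis, that these two conditions are each equivalent to $0 \notin \sigma_{p}(K)$.

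Second, I would invoke the standard orthogonal decomposition $\mathcal{H} = \overline{\operatorname{ran}(K)} \oplus \ker(K^{*})$. Since $K = K^{*}$, this becomes $\overline{\operatorname{ran}(K)} = \ker(K)^{\perp}$, so $K$ has dense range iff $\ker(K) = \{0\}$, i.e.\ iff $K$ is injective, i.e.\ iff $0 \notin \sigma_{p}(K)$. Combining the two steps and contraposing yields the desired equivalence $0 \in \sigma_{S}(K) \iff 0 \in \sigma_{p}(K)$.

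I do not anticipate any real obstacle: the argument is essentially a two-line consequence of the earlier theorem together with the self-adjoint identity $(\ker K)^{\perp} = \overline{\operatorname{ran} K}$. The one point to be careful about is the (implicit) $\lambda I - T$ in Theorem \ref{Basic Pros}, since otherwise the statement would not depend on $\lambda$. It is worth remarking that compactness is actually superfluous for the equivalence stated here; it would become relevant only if one wished to describe the remainder of $\sigma_{S}(K)$ as well, in which case one could use the spectral decomposition $K = U A$ with $A$ a nonnegative compact operator whose eigenvectors span $\mathcal{H}$, exactly as in the proof of the preceding corollary.
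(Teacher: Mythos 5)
Your proof is correct, and the paper in fact states this theorem without any proof, evidently regarding it as the immediate consequence of Theorem \ref{Basic Pros} (read with $\lambda I-T$) and the identity $\overline{\operatorname{ran}K}=(\ker K)^{\perp}$ for self-adjoint $K$ that you spell out. Your observations that compactness is superfluous and that Theorem \ref{Basic Pros} contains a typographical slip are both accurate.
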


\begin{example}\label{ExES}
In the case that $K$ is a compact operator but its spectrum is equivalent to $\{0\}$, there is example in which $\sigma_{S}(K)=\emptyset$.
Consider an injective bilateral shift which also be a compact operator. For example, let $\{\tilde{e}_{j}\}_{j \in \mathbb{Z}}$ be an ONB
of $\mathcal{H}$ and $w_{j}=\frac{1}{1+|j|}$ for $j \in \mathbb{Z}$.
Then $K\tilde{e}_{j}=w_{j}\tilde{e}_{j-1}$ is such a compact injective bilateral
weighted shift operator(CIBWS, in short). As well-known that the spectrum of a weighted shift $T$ always be symmetric
(see \cite{Conway}, corollary 1 and 2, p52), that is, if
$\lambda \in \sigma(T)$ then we have $e^{i\theta}\lambda \in \sigma(T)$. So we must have $\sigma(K)=\{0\}$. Now to decide wether
$\sigma_{S}(K)=\{0\}$ or not, we need more information given by polar decomposition of $K$. To avoid complex computation on
$K^{*}K$, we need to rearrange the ONB $\{e_{j}\}_{j \in \mathbb{Z}}$ in some proper order as follows.
We take all $\tilde{e}_{j}$ with negative index $j<0$ as even integer and the one with index $j>0$ as positive integer.
More clarity, let
$$
e_{2k}=\tilde{e}_{-k}, e_{2k+1}=\tilde{e}_{k} \hbox{ for k } \in \mathbb{N}\cup\{0\}.
$$
Under the ONB $\{e_{k}\}_{k=1}^{\infty}$, the classical backward bilateral shift is just the operator
$$
Se_{2k+1}=e_{2k-1}, Se_{2k-2}=e_{2k} \hbox{ for }k=1, 2, \cdots.
$$
Now the CIBWS $K$ defined above can be rewritten as
$$
Ke_{2k+1}=w_{k}e_{2k-1}, Ke_{2k-2}=w_{-k}e_{2k}.
$$
Now consider the diagonal operator $D=diag(d_{j})$ with element $d_{j}$ on diagonal line in which $d_{2j-1}=w_{j}, d_{2j}=w_{-j}$,
then we have $K=SD$ which implies that $0$ is not in the Schauder spectrum $\sigma_{S}(K)$. Therefore we have $\sigma_{S}(K)=\emptyset$.
\end{example}

Compare to the classical Riesz's theorem on compact operator(see \cite{Conway}, theorem7.1, p219), we can characterize the Schauder
spectrum of compact operator as follows:

\begin{theorem}
If $\mathcal{H}$ is a separable Hilbert space and $\dim \mathcal{H}=\infty$. Then for a compact operator $K \in L(\mathcal{H})$,
one and only one of the following situations occurs: \\
1. $\sigma_{S}(K)=\emptyset$; \\
2. $\sigma_{S}(K)=\{0\}$;\\
3. $\sigma_{S}(K)=\{\lambda_{1}, \lambda_{2}, \cdots, \lambda_{n}\}$ in which $\lambda_{k} \ne 0$ and $\dim \ker(\lambda_{k} - K)<\infty$;\\
4. $\sigma_{S}(K)=\{0, \lambda_{1}, \lambda_{2}, \cdots, \lambda_{n}\}$ in which $\lambda_{k} \ne 0$ and $\dim \ker(\lambda_{k} - K)<\infty$;\\
5. $\sigma_{S}(K)=\{\lambda_{1}, \lambda_{2}, \cdots\}$. $0$ is the unique limit point of $\lambda_{k}$
and $\lambda_{k} \ne 0, \dim \ker(\lambda_{k} - K)<\infty$. \\
6. $\sigma_{S}(K)=\{0, \lambda_{1}, \lambda_{2}, \cdots\}$. $0$ is the unique limit point of $\lambda_{k}$
and $\lambda_{k} \ne 0, \dim \ker(\lambda_{k} - K)<\infty$.
\end{theorem}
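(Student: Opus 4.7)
The plan is to reduce this six-way classification to two ingredients that are already in hand: the classical Riesz structure theorem for the spectrum of a compact operator, and the criterion for $0$-membership given by the corollary immediately preceding the statement.

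First, I would dispose of all non-zero spectral values in one stroke. For $\lambda\neq 0$ and $K$ compact, $\lambda I-K$ is Fredholm of index zero, so it is injective if and only if it is surjective if and only if it is invertible. Combined with theorem \ref{Basic Pros} (which says $\lambda\in\rho_S(T)$ iff $\lambda I-T$ is injective with dense range), this yields
\[
\lambda\in\sigma_S(K)\ \Longleftrightarrow\ \lambda\in\sigma(K)\qquad\text{for every }\lambda\neq 0,
\]
and moreover any such $\lambda$ is automatically an eigenvalue of $K$, with $\dim\ker(\lambda-K)<\infty$ by the classical Riesz theory.

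Second, I would invoke the Riesz theorem cited in the statement: $\sigma(K)\setminus\{0\}$ is either (a) empty, (b) a finite non-empty set $\{\lambda_1,\dots,\lambda_n\}$, or (c) a countably infinite set $\{\lambda_1,\lambda_2,\dots\}$ whose only limit point in $\mathbb{C}$ is $0$; in cases (b) and (c) each $\lambda_k$ is a non-zero eigenvalue of finite multiplicity. By the previous paragraph the same trichotomy transfers verbatim to $\sigma_S(K)\setminus\{0\}$.

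Third, I would decide membership of $0$ separately, using the preceding corollary: $0\in\sigma_S(K)$ iff $0\in\sigma_p(K)$ or $\overline{\operatorname{ran} K}\neq\mathcal{H}$. This is a genuinely independent binary choice; example \ref{ExES} already shows that both values of the binary are consistent with $\sigma(K)=\{0\}$. Crossing the three options for $\sigma_S(K)\setminus\{0\}$ with the two options for $0$ gives exactly the six mutually exclusive cases listed (cases 1--2 from option (a), cases 3--4 from (b), cases 5--6 from (c)).

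The only substantive step is the Fredholm-index-zero reduction in the first paragraph; once that identifies $\sigma_S(K)\setminus\{0\}$ with $\sigma(K)\setminus\{0\}$, the rest is bookkeeping against the Riesz trichotomy and the $0$-criterion. Consequently I do not anticipate any real obstacle beyond citing these two ingredients cleanly.
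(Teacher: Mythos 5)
Your argument is correct. The paper states this theorem without proof, presenting it as an immediate consequence of Riesz's theorem together with the preceding corollary; your write-up is precisely that intended derivation, with the Fredholm index-zero observation correctly upgrading the paper's one-sided inclusion $\sigma(K)\setminus\{0\}\subseteq\sigma_{S}(K)$ to the equality $\sigma_{S}(K)\setminus\{0\}=\sigma(K)\setminus\{0\}$ needed for the classification, after which the Riesz trichotomy and the separate criterion for $0\in\sigma_{S}(K)$ yield the six mutually exclusive cases.
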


\section{The Orbits of Schauder Operators and Schauder matrices}\label{OrbitofSchauderMatrix}


\subsection{}
Now we fix an ONB $\{e_{n}\}_{n=1}^{\infty}$. For a Schauder operator $T$, suppose $W$ be a unitary matrix(Hence a well defined
operator under the ONB fixed) such that $AW$ be a Schauder matrix. Now the set
$$
O_{gl}(T)=\{XT; X \in Gl(\mathcal{H})\}
$$
gives exactly the Schauder matrices(operators) $XTW$ whose corresponding basis is equivalent to the basis consisting of
the column vectors of $TW$. As well known that the topology group $Gl(\mathcal{H})$ is connected under the norm topology.
Hence roughly speaking two equivalent basis can always ``deform'' to each other. Moreover, if we ask that $X$ be a unitary operator,
then this ``deformation'' may have more nice properties. Following proposition is an example.

\begin{proposition}
Suppose that $F$ is a Schauder matrix and $U$ be a unitary operator. Then the basis given by $F$ and $UF$ are equivalent
basis with the same basis const. Moreover, if $F$ is a unconditional basis, then they have the same unconditional basis const.
\end{proposition}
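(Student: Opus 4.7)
The plan is to exploit the fact that a unitary operator is an isometric linear bijection, so conjugation by $U$ preserves operator norms. Writing $\psi = \{f_n\}$ for the basis given by the columns of $F$ and $\psi' = \{Uf_n\}$ for the basis given by the columns of $UF$, everything reduces to identifying the partial-sum projections (respectively sign-multiplier operators) of $\psi'$ as unitary conjugates of those of $\psi$.

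First I would verify that $\psi'$ actually is a Schauder basis equivalent to $\psi$. Given a scalar sequence $(c_n)$, continuity of $U$ and $U^{-1}$ yields
\[
\sum_{n=1}^{N} c_n f_n \longrightarrow x \quad \Longleftrightarrow \quad \sum_{n=1}^{N} c_n U f_n = U\Bigl(\sum_{n=1}^{N} c_n f_n\Bigr) \longrightarrow Ux,
\]
so $\psi$ and $\psi'$ share the same sequence space, which by the definition recalled in the introduction (and Arsove's theorem) is exactly equivalence of bases.

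Next, the basis constant is $K(\psi) = \sup_{n}\|P_n^{\psi}\|$, where $P_n^{\psi}\bigl(\sum_k c_k f_k\bigr) = \sum_{k=1}^{n} c_k f_k$. A direct computation gives the intertwining relation
\[
P_n^{\psi'} \,=\, U\, P_n^{\psi}\, U^{-1},
\]
because $P_n^{\psi'}\bigl(\sum_k c_k U f_k\bigr) = \sum_{k=1}^{n} c_k U f_k = U\bigl(\sum_{k=1}^{n} c_k f_k\bigr) = U P_n^{\psi} U^{-1}\bigl(\sum_k c_k U f_k\bigr)$. Since $U$ is unitary, $\|U P_n^{\psi} U^{-1}\| = \|P_n^{\psi}\|$, and taking supremum over $n$ gives $K(\psi') = K(\psi)$.

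For the unconditional statement, the unconditional basis constant is $K_u(\psi) = \sup \|M_\varepsilon^{\psi}\|$, where the supremum is over all sign sequences (or, equivalently in the complex case, sequences with $|\varepsilon_k|\le 1$) and $M_\varepsilon^{\psi}\bigl(\sum_k c_k f_k\bigr) = \sum_k \varepsilon_k c_k f_k$. Exactly the same argument as before yields $M_\varepsilon^{\psi'} = U M_\varepsilon^{\psi} U^{-1}$, so unitary invariance of the operator norm gives $K_u(\psi') = K_u(\psi)$. There is no real obstacle here; the only point one must be careful about is writing down the intertwining relations $P_n^{\psi'} = U P_n^{\psi} U^{-1}$ and $M_\varepsilon^{\psi'} = U M_\varepsilon^{\psi} U^{-1}$ correctly, after which the unitarity of $U$ does all the work.
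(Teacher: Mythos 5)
Your proof is correct and follows essentially the same route as the paper: the paper's own proof consists of the single observation that $\|UPU^{*}\|=\|P\|$ for any projection $P$ together with a citation identifying the basis (and unconditional basis) constants as suprema of norms of the canonical projections, which is exactly the intertwining-plus-unitary-invariance argument you write out in full. Your version merely makes explicit the relations $P_n^{\psi'}=UP_n^{\psi}U^{-1}$ and $M_\varepsilon^{\psi'}=UM_\varepsilon^{\psi}U^{-1}$ that the paper leaves to the cited propositions.
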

\begin{proof}
For any projection $P$, we have $||UPU^{*}||=||P||$. Then apply proposition 2.6 and 2.7 in \cite{Cao-1}.
\end{proof}

Compare to the set $O_{gl}(T)$, we consider the set
$$
O_{u}(T)=\{UT; U \in U(\mathcal{H})\}.
$$
It just gives a part of equivalent basis of $A$ with the same basis const, although not all in general.
However, this situation is more interesting since it have a natural operator theory understanding, so called,
the polar decomposition of operator. In fact, since $T$ is injective and having a dense range in $\mathcal{H}$
(proposition 2.14, \cite{Cao-1}),
we know that the partial isometry $U$ appearing in its polar decomposition $T=UA$ must be a unitary operator.
Hence we have $O_{u}(T)=O_{u}(A)$ if $T$ is a Schauder operator. This fact suggest us that to study the Schauder operator
we can begin with the self-adjoint operators having a dense range and then consider their orbit $O_{u}(A)$
(cf, papers \cite{Y2}, \cite{Ole}).

A natural question is
\begin{question}
Assume that $F_{1}, F_{2}$ are equivalent Schauder bases and $T_{1}, T_{2}$ be the corresponding operators.
Does there be some notable difference between the operators $T_{1}$ and $T_{2}$?
\end{question}

From the operator theory viewpoint, the answer is affirmative. We shall show that even in the case $T_{1}=UT_{2}$,
their spectrum may be very different. In fact, we have

\begin{theorem}\label{Maintheorem3}
Assume that $A$ is a self-adjoint operator such that $0 \notin \sigma_{p}(A)$. Then there is some unitary operator $U$
such that $\sigma_{p}(UA)=\sigma_{p}(AU^{*})=\emptyset$. Moreover, we can choose the unitary operator $U$ as a combination
of unitary spreads.
\end{theorem}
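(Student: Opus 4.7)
The plan is to use the spectral theorem to reduce the problem to the pure-point part of $A$ and then construct $U$ there as a bilateral shift (a single unitary spread) in a re-enumeration of the eigenvectors that is chosen to kill the point spectra of both $UA$ and $AU^{*}$ simultaneously.

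First I would decompose $\mathcal{H}=\mathcal{H}_{p}\oplus\mathcal{H}_{c}$, where $\mathcal{H}_{p}$ is the closed span of the eigenvectors of $A$ and $\mathcal{H}_{c}$ its orthogonal complement. On $\mathcal{H}_{c}$ the operator $A$ already has empty point spectrum, so taking the $\mathcal{H}_{c}$-component of $U$ to be the identity leaves both $UA$ and $AU^{*}$ eigenvalue-free there, and the problem reduces to the pure-point part. On $\mathcal{H}_{p}$ we have an ONB $\{e_{n}\}_{n\in\mathbb{N}}$ of eigenvectors with $A_{p}e_{n}=\mu_{n}e_{n}$ and all $\mu_{n}\in\mathbb{R}\setminus\{0\}$, since $0\notin\sigma_{p}(A)$.

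On the pure-point part I would fix a bijection $\phi:\mathbb{Z}\to\mathbb{N}$, put $f_{k}=e_{\phi(k)}$, and let $U_{p}$ be the unitary spread defined by $U_{p}f_{k}=f_{k+1}$. Then $U_{p}A_{p}$ is the weighted bilateral shift with weights $\nu_{k}=\mu_{\phi(k)}$. The formal eigenvalue recursion $\nu_{k-1}x_{k-1}=\lambda x_{k}$, analogous to the one already appearing in Example~\ref{ExES}, shows that $\sigma_{p}(U_{p}A_{p})$ is confined to the open annulus where $\limsup_{k\to\infty}(\prod_{j=0}^{k-1}|\nu_{j}|)^{1/k}<|\lambda|<\liminf_{m\to\infty}(\prod_{j=-m}^{-1}|\nu_{j}|)^{1/m}$, while the dual recursion for $A_{p}U_{p}^{*}=(U_{p}A_{p})^{*}$ is governed by the same two quantities with $\limsup$ and $\liminf$ interchanged. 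If $\phi$ is chosen so that the forward and backward partial products $\prod|\nu_{j}|$ both stay within constant multiplicative factors of one common geometric sequence $r^{k}$, then both annuli collapse, and the boundary circle $|\lambda|=r$ carries no $\ell^{2}$ eigenvector either, because the resulting formal solution $x_{k}$ has modulus bounded away from zero and is therefore not square-summable.

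The main obstacle I expect is precisely this combinatorial choice of $\phi$. For a tame multiset $\{|\mu_{n}|\}$, for example the essentially-constant case or the case converging to zero treated in Example~\ref{ExES}, a single bijection $\phi$ works and $U=U_{p}\oplus I$ is literally one unitary spread. In general, however, the multiset $\{|\mu_{n}|\}$ may force the forward and backward asymptotics of $\{|\nu_{k}|\}$ apart no matter how $\phi$ is chosen; the remedy is to partition $\{e_{n}\}_{n\in\mathbb{N}}$ into countably many infinite blocks so that within each block the restricted sub-multiset of $|\mu_{n}|$'s admits a common asymptotic geometric mean, apply the bilateral-shift construction separately on the closed span of each block, and take $U$ to be the orthogonal direct sum of these shifts together with the identity on $\mathcal{H}_{c}$. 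This direct sum is the ``combination of unitary spreads'' promised in the statement, and the verification that both $UA$ and $AU^{*}$ have empty point spectrum then reduces block-by-block to the single-shift analysis described above.
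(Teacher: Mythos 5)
There is a genuine gap at the very first step: the reduction to the pure-point part. You set $U=U_{p}\oplus I$ with $U_{p}$ acting only on $\mathcal{H}_{p}$ and the identity on $\mathcal{H}_{c}$. This works only when $\mathcal{H}_{p}$ is infinite dimensional. If $A$ has a nonempty but finite point spectrum with finite multiplicities (say $A=1\oplus M_{x}$ on $\mathbb{C}\oplus L^{2}[2,3]$, so $\dim\mathcal{H}_{p}=1$), then $U_{p}A_{p}$ is an operator on a nonzero finite-dimensional space and必 has an eigenvalue by the fundamental theorem of algebra; since $\sigma_{p}(UA)=\sigma_{p}(U_{p}A_{p})\cup\sigma_{p}(A_{c})$, no choice of $U_{p}$ supported on $\mathcal{H}_{p}$ alone can make $\sigma_{p}(UA)$ empty. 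In this case the unitary must genuinely mix $\mathcal{H}_{p}$ with the continuous part: this is exactly what the paper's Lemma \ref{Lemma: Continuous case with exactly one point spectrum} and Theorem \ref{theorem: Continuous case of 2nd Maintheorem} do, by adjoining each eigenvector to an infinite chain of spectral subspaces $\mathcal{H}_{n}=\operatorname{Ran}E_{I_{n}}$ of the continuous spectrum clustering at a point $\alpha$, and shifting $e^{(n)}_{k}\mapsto e^{(n+1)}_{k}$ across that chain. Your plan has no mechanism for this case, and it cannot be patched without abandoning the $U_{p}\oplus I$ form.

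On the part of the argument you do address (infinite-dimensional $\mathcal{H}_{p}$), the bilateral-shift framework and the annulus criterion are sound in outline and parallel the paper's discrete-case lemmas, including the correct observation that $\sigma_{p}(A_{p}U_{p}^{*})=\sigma_{p}((U_{p}A_{p})^{*})$ is controlled by the dual recursion. But the key combinatorial assertion --- that $\{|\mu_{n}|\}$ can always be partitioned into countably many infinite blocks each admitting a common forward and backward asymptotic geometric mean, with the normalized partial products bounded above and below --- is exactly where the work lies, and it is asserted rather than proved. The paper spends four lemmas on precisely this point, splitting according to whether multiplicities are finite or infinite, whether $\sigma_{p}(A)$ has an accumulation point, and whether that accumulation point is $0$ (where the annulus degenerates and injectivity plus dense range of $A$ must be invoked separately, as in your Example~\ref{ExES} citation). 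As written, the proposal is a plausible strategy for one half of the theorem and is missing the other half.
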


By virtue of \ref{Maintheorem3}, we can always choose a good representative element
from the set $O_{u}(T)$. That is the following theorem.

\begin{theorem}\label{Theorem: Main Theorem of 3rd Paper}
For each Schauder operator $T$, there is an operator $T^{'} \in O_{u}(T)$ such that $\sigma_{S}(T^{'})=\emptyset$.
\end{theorem}
\begin{proof}
By virtue of theorem 2.13 in \cite{Cao-1}, we need only to verify the following claim:
\begin{claim}
For a self-adjoint operator $A$, there be some unitary operator $U$ such that
the operator $\lambda I-UA$ always be injective and has a dense range
in $\mathcal{H}$ for each $\lambda \in \mathbb{C}$.
\end{claim}
Now by virtue of theorem \ref{Maintheorem3}, there is some unitary operator $U$ such that
$\sigma_{p}(UA)=\sigma_{p}(AU^{*})=\emptyset$. From $\sigma_{p}(UA)=\emptyset$, we know that the operator
$\lambda I-A$ always be injective; On the other side, basic operator theory result tell us
$$
\overline{Ran(\lambda I-UA)}=(\ker(\lambda I-UA)^{*})^{\perp}.
$$
So from $\sigma_{p}(AU^{*})=\emptyset$ we have
$$
\overline{Ran(\lambda I-UA)}=(\ker(\bar{\lambda} I-AU^{*}))^{\perp}=\mathcal{H}.
$$
\end{proof}

We shall prove theorem \ref{Maintheorem3} in later subsections. Prior to this, we give some remarks at first.

\begin{remark}
Relation to the ``invariant subspace'' problem. \\
1. It is trivial to check that we have $\sigma_{S}(T)=\emptyset$ if $T$ has no nontrivial subspace. \\
2. Assume that there do have some operator $T$ having no nontrivial
invariant subspace. Then $T$ appear in some orbit $O_{u}(A)$ of some self-adjoint operator $A$ since $T$ must be
a Schauder operator(injective and having a dense range). What can we say about the self-adjoint operator $A$? Clearly
there do exist some orbit $O_{u}(A)$ such that each operator in it must have a nontrivial invariant subspace. A trivial
example is the identity operator $I$(cf, \cite{Beurling}, or IX.9 \cite{Conway}). \\
3. Theorem \ref{Maintheorem3} tell us that we can remove the eigen-subspaces, that is, the most ``trivial'' nontrivial subspaces.
\end{remark}

\begin{remark}Continuous ``deformation'' of Schauder bases.
If we restrict to consider the basis whose corresponding Schauder matrix represents a bounded operator, then we can
define the \textsl{ continuous deformation} of bases as follows. A (continuous) curve of bases is just a map
$$
\gamma : I \rightarrow L(\mathcal{H})
$$
satisfying the following properties:\\
1. for each $t \in I$, $\gamma(t)$ is a Schauder matrix; \\
2. $\gamma(t)$ represents a bounded operator; \\
3. The map is continuous in the variable $t \in I$ under the norm topology $L(\mathcal{H})$. \\
Here $I$ is an interval(either open or closed).
Denote by $\mathcal{F}$ the set of all Schauder matrices, we have the following question:
\begin{question}
Does $\mathcal{F}$ must be a connected set?
\end{question}
Given a Schauder matrix $F$, denote by $O_{gl}(F)$
the set consisting of all Schauder matrices equivalent to $F$.
As well-known, invertible operators are connected(Problem 141, \cite{Halmos}, p76), so we have
\begin{theorem}
The set $O_{gl}(F)$ is always path-connected for each Schauder matrix $F$.
\end{theorem}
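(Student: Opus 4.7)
The plan is to push the known path-connectedness of $Gl(\mathcal{H})$ (the fact quoted from Problem 141 of \cite{Halmos}) forward through the orbit map $X \mapsto M_X F$. By definition every element of $O_{gl}(F)$ has the form $M_X F$ for some $X \in Gl(\mathcal{H})$, so it suffices to show that each such $M_X F$ can be joined continuously to $F = M_I F$ inside $O_{gl}(F)$; concatenating two such paths then connects any pair $M_{X_1} F, M_{X_2} F$.

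First, I would fix $X \in Gl(\mathcal{H})$ and invoke the cited Halmos result to obtain a norm-continuous map $X : [0,1] \to Gl(\mathcal{H})$ with $X(0) = I$ and $X(1) = X$. Then I would set $\gamma(t) = M_{X(t)} F$. The fact that $\gamma(t) \in O_{gl}(F)$ for every $t$ is immediate since $X(t) \in Gl(\mathcal{H})$.

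Second, I would verify continuity of $\gamma$. The topology implicit in the preceding discussion of continuous deformations of bases is the operator-norm topology on $L(\mathcal{H})$, transported to matrices through the fixed ONB. Writing $T$ for the bounded operator represented by $F$, the map $X \mapsto M_X F$ corresponds to left multiplication $X \mapsto XT$, and the elementary estimate
$$
\|M_{X(s)} F - M_{X(t)} F\| \;=\; \|X(s) T - X(t) T\| \;\le\; \|X(s) - X(t)\| \cdot \|T\|
$$
shows that $\gamma$ is continuous. This produces the required path from $F$ to $M_X F$ inside $O_{gl}(F)$.

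There is no genuine obstacle here: the theorem is essentially the elementary principle that a path-connected topological group acting by left multiplication has path-connected orbits. The only subtlety worth flagging is the choice of topology on $\mathcal{F}$; once one accepts (as the author implicitly does in the paragraph preceding this theorem) the norm topology inherited from $L(\mathcal{H})$, the proof reduces to the single continuity estimate above combined with the Halmos connectivity fact.
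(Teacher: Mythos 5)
Your argument is correct and is exactly the route the paper intends: the theorem is stated there as an immediate consequence of the path-connectedness of $Gl(\mathcal{H})$ (Problem 141 in Halmos), and your continuity estimate $\|X(s)T-X(t)T\|\le\|X(s)-X(t)\|\cdot\|T\|$ supplies the one detail the paper leaves implicit. Just note that this estimate presupposes that $F$ represents a bounded operator $T$, which is the standing hypothesis of the remark in which the theorem appears.
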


Denote by $O_{gl}^{c}(F)$ the set of Schauder matrices
$F$ which is a Schauder matrix and there is a sequence $F_{k} \in O_{gl}(F)$ such that $||F_{k}-F || \rightarrow 0$.
\begin{question}
If $F$ is a conditional(unconditional) matrix, whether each matrix $F^{'}\in O^{c}_{gl}(F)$ must be
also a conditional(unconditional)
matrix or not?
\end{question}
\end{remark}


\subsection{}Before going ahead, recall the definition of the ``\textsl{spread from $A$ to $B$}'' given by W. T. Gowers and
B. Maurey.

\begin{definition}(\cite{Gowers2}, p549)
Given an ONB $\{e_{n}\}_{n=1}^{\infty}$
and two infinite subsets $A, B$ of $\mathbb{N}$. Let $c_{00}$ be the vector
space of all sequences of finite support. Let the elements of $A$ and $B$ be written in increasing order respectively
as $\{a_{1}, a_{2}, \cdots\}$ and $\{b_{1}, b_{2}, \cdots\}$. Then $e_{n}$ maps to $0$ if $n \notin A$, and $e_{a_{k}}$
maps to $e_{b_{k}}$ for every $k \in \mathbb{N}$. Denote this map by $S_{A, B}$ and call it the spread from $A$ to $B$.
\end{definition}

\begin{example}(\cite{Gowers2}, p549)
Let $A=\{2, 3, 4, \cdots\}$ and $B=\{1, 2, 3\}$, then $S_{A, B}$ is just the backward unilateral shift operator(cf, \cite{Shields})
which is defined as $S(e_{n})=e_{n-1}$ for $n \ge 2$ and $Se_{1}=0$.
\end{example}

\begin{example}\label{Example: Unitary Op is LC of Spreads}
Using spread forms, we can write some unitary operator into their
linear combination. For example, let $\sigma$ be a bijection on $\mathbb{N}$(a permutation of $\mathbb{N}$,
so called in \cite{Singer}) defined as $\sigma(2n)=2(n-1)$ for $n \ge 2$ and $\sigma(2)=1$ for even numbers and
$\sigma(2n-1)=2n+1$ for odd numbers. Then the operator $U_{\sigma}(e_{n})=e_{\sigma(n)}$ is a bilateral shift and a unitary operator.
Let $A_{2}=\{2, 4, 6, \cdots\}, B_{2}=\{1, 2, 4, \cdots\}$ and $A_{1}=\{1, 3, 5, \cdots\}$ and $A_{2}=\{3, 5, 7, \cdots\}$.
We have $U_{\sigma}=S_{A_{1}, B_{1}}+S_{A_{2}, B_{2}}$.
\end{example}

\begin{definition}
A unitary operator $U$ on $\mathcal{H}$ is said to be a unitary spread if there is a sequence $\{S_{A_{n}, B_{n}}\}_{n=1}^{\infty}$ of spreads
such that the series $\sum_{n=1}^{\infty} S_{A_{n}, B_{n}}$ converges to $U$ in SOT. Moreover, $U$ will be called a finite unitary spread
if $U$ can be written as a finite linear combination.
\end{definition}

In the paper \cite{Cao}, we proved the following result.
\begin{lemma}\label{Lemma: Bijection on N is a unitary Spread}
For each bijection $\sigma$ on the set $\mathbb{N}$, the unitary operator $U_{\sigma}$ is a unitary spread.
\end{lemma}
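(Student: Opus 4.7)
The idea is to realize $U_{\sigma}$ as a countable SOT-sum of spreads by partitioning $\mathbb{N}$ into pieces on which $\sigma$ is order-preserving. Specifically, I would construct a disjoint decomposition $\mathbb{N} = \bigsqcup_{k\ge 1} A_{k}$ such that, writing each $A_{k}$ in increasing order as $\{a^{(k)}_{1} < a^{(k)}_{2} < \cdots\}$, the image sequence satisfies $\sigma(a^{(k)}_{1}) < \sigma(a^{(k)}_{2}) < \cdots$. Setting $B_{k} = \sigma(A_{k})$ enumerated in increasing order, the spread $S_{A_{k},B_{k}}$ then sends $e_{a^{(k)}_{j}}$ to $e_{\sigma(a^{(k)}_{j})} = U_{\sigma}e_{a^{(k)}_{j}}$ and annihilates every other basis vector.

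To produce such a partition I would apply the classical patience-sorting procedure: process $n = 1, 2, 3, \ldots$ in turn and place $n$ into the pile $A_{k}$ of smallest index whose current maximum $m$ satisfies $\sigma(m) < \sigma(n)$, opening a new pile if no existing one qualifies. By construction every $A_{k}$ is $\sigma$-order-preserving and the piles partition $\mathbb{N}$. Once the partition is in hand, let $P_{N}$ denote the orthogonal projection onto $\overline{\mathrm{span}}\{e_{n} : n \in A_{1}\cup\cdots\cup A_{N}\}$. A case analysis on basis vectors gives
\[
\sum_{k=1}^{N} S_{A_{k},B_{k}} \;=\; U_{\sigma}\,P_{N}.
\]
Because $\bigcup_{k} A_{k} = \mathbb{N}$, we have $P_{N} \to I$ in SOT, and since $U_{\sigma}$ is bounded this yields $\sum_{k=1}^{N} S_{A_{k},B_{k}} \to U_{\sigma}$ in SOT, completing the proof.

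The main technical nuisance I foresee is that patience sorting can produce finite piles (for the transposition $\sigma = (1\,2)$ extended by the identity, it returns $A_{2} = \{2\}$), whereas the definition of a spread in the paper requires both $A$ and $B$ to be infinite. I would resolve this in one of two ways: (a) observe that the argument above is unaffected if one allows finite $A_{k}$, since the resulting $S_{A_{k},B_{k}}$ is still a basis-permuting partial isometry of exactly the form appearing in the spread formalism; or (b) consolidate the finite leftover pieces into infinite ones --- in the transposition example one instead takes $A_{1} = \{\mathrm{odd\ integers}\}$ and $A_{2} = \{\mathrm{even\ integers}\}$, both infinite and $\sigma$-order-preserving. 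A short bookkeeping argument shows that the countable collection of finite piles produced by patience sorting can always be merged into (or distributed among) the infinite piles without destroying the order-preserving property, so no generality is lost.
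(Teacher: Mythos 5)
The first thing to note is that this paper contains no proof of this lemma at all: the statement is introduced with ``In the paper \cite{Cao}, we proved the following result'' and is imported from the preprint \emph{On unitary operators spread $\mathbb{N}$}. So there is nothing in the present text to compare your argument against, and it has to be judged on its own. Your overall strategy is the natural one and its operator-theoretic half is completely correct: if $\mathbb{N}=\bigsqcup_k A_k$ with $\sigma$ increasing on each $A_k$ and $B_k=\sigma(A_k)$, then indeed $S_{A_k,B_k}e_{a^{(k)}_j}=e_{\sigma(a^{(k)}_j)}$, the identity $\sum_{k=1}^N S_{A_k,B_k}=U_\sigma P_N$ holds by the case analysis you describe, and $U_\sigma P_N\to U_\sigma$ in SOT since $P_N\to I$ and $U_\sigma$ is bounded.

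The genuine gap is the combinatorial step you dispose of with ``a short bookkeeping argument'': the existence of a partition of $\mathbb{N}$ into countably many \emph{infinite} $\sigma$-increasing sets is the entire content of the lemma, and neither of your two escape routes as stated closes it. Option (a) silently replaces the paper's definition of a spread (which requires $A$ and $B$ infinite), so it does not prove the stated lemma; option (b) is asserted, not proved, and merging is not innocent --- a finite pile element $x$ cannot in general be adjoined to a given infinite increasing pile (take $A=\{3,4,5,\dots\}$ with $\sigma$ the identity there and $\sigma(1)$ large: $1$ cannot join $A$). The claim is nevertheless true, and the proof needs one real observation plus a scheduling device. Call $m\neq n$ \emph{compatible} if $m<n\Leftrightarrow\sigma(m)<\sigma(n)$; then each $n$ is incompatible with only finitely many integers, since an incompatible partner $a$ satisfies either $a<n$ or $\sigma(a)<\sigma(n)$. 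Now build the piles round-robin: fix a surjection $\tau:\mathbb{N}\to\mathbb{N}$ hitting every value infinitely often, and at step $t$ adjoin to pile $A_{\tau(t)}$ the least unused integer compatible with all of its (finitely many) current members; such an integer exists by local finiteness. Every pile is visited infinitely often, hence infinite and $\sigma$-increasing. If some integer were never used, let $m$ be the least such; after all smaller integers are placed, $m$ is the least admissible candidate for any pile containing none of the finitely many integers incompatible with $m$, and all but finitely many piles are of that kind, so $m$ would be placed --- a contradiction. With this lemma in hand your proof is complete; I would drop patience sorting entirely, since it can produce finite piles and buys you nothing the round-robin construction does not.
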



\subsection{}Now we begin to prove theorem \ref{Maintheorem3}. Firstly we give an outline of the proof.
By the spectral theorem of normal operators, we write a self-adjoint operator into the orthogonal diagonal
direct sum $A=A_{0} \oplus A_{1}$ in which these operators satisfy the following properties.

Property 1. The eigenvectors of $A_{0}$ defined on the Hilbert space $\mathcal{H}_{1}$
span the whole Hilbert space $\mathcal{H}_{1}$;

Property 2. The operator $A_{1}$ defined on the Hilbert space $\mathcal{H}_{1}$ has only a ``small''
point spectrum. The meaning of ``small'' shall be clear in later proof.

Roughly speaking, $A_{0}$ represents the discrete case and $A_{1}$ the continuous one. Moreover,
in each situation, the spectrum containing the point $0$ or not will be considered by different ways.
We shall deal with the discrete case in this subsection and then turn to the continuous one later.

\begin{lemma}\label{BasicLemma}
Assume that $A$ is a self-adjoint operator satisfying the following properties:

1. $\sigma(A)=\sigma_{p}(A) \cup \{0\}$ and $0$ is the unique accumulation point of $\sigma(A)$;

2. For each $t \in \sigma_{p}(A)$, $\dim \ker{A-tI}=1$;

3. $R(A)$ is dense in the Hilbert space $\mathcal{H}$.

Then there is a unitary spread $U$ such that we have both $\sigma_{p}(UA)=\emptyset$ and $\sigma_{p}(AU^{*})=\emptyset$.
\end{lemma}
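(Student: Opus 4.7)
The plan is to diagonalise $A$ and then exhibit a single permutation unitary that destroys every eigenvector of both $UA$ and $AU^{*}$. By the spectral theorem, hypotheses (1)--(3) yield an ONB $\{e_n\}_{n=1}^{\infty}$ with $Ae_n=t_ne_n$, every $t_n\neq 0$ (since $A=A^{*}$ together with $\overline{R(A)}=\mathcal{H}$ forces $\ker A=0$), and $t_n\to 0$. I fix any bijection $\sigma:\mathbb{N}\to\mathbb{N}$ with no finite cycles, so that every orbit is bi-infinite; since the $\sigma^k(n_0)$ are distinct naturals, $\sigma^k(n_0)\to\infty$ as $|k|\to\infty$. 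Set $U:=U_\sigma$ by $Ue_n=e_{\sigma(n)}$. Lemma \ref{Lemma: Bijection on N is a unitary Spread} realises $U$ as a unitary spread, which is the strong form of the ``combination of unitary spreads'' stated in Theorem \ref{Maintheorem3}.

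For $\sigma_p(UA)=\emptyset$ I would write $\xi=\sum c_ne_n$ and equate coefficients of $e_m$ in $(UA-\lambda)\xi=0$, obtaining the one-step recursion $t_{\sigma^{-1}(m)}c_{\sigma^{-1}(m)}=\lambda c_m$, equivalently $c_{\sigma(n)}=\lambda^{-1}t_nc_n$ when $\lambda\neq 0$. The case $\lambda=0$ is immediate: $t_n\neq 0$ forces $c_n\equiv 0$. For $\lambda\neq 0$ the recursion propagates along $\sigma$-orbits, so iterating backward from an orbit representative $n_0$ gives $c_{\sigma^{-k}(n_0)}=\lambda^{k}c_{n_0}\prod_{j=1}^{k}t_{\sigma^{-j}(n_0)}^{-1}$. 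Because $|t_{\sigma^{-j}(n_0)}|\to 0$ as $j\to\infty$, there is some $J$ beyond which $|\lambda/t_{\sigma^{-j}(n_0)}|>2$; hence $|c_{\sigma^{-k}(n_0)}|$ grows at least geometrically in $k$, and the $\ell^{2}$ condition on $\xi$ along this orbit forces $c_{n_0}=0$. Applying this to a representative of every orbit forces $\xi=0$, so $\sigma_p(UA)=\emptyset$.

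The statement $\sigma_p(AU^{*})=\emptyset$ follows by the symmetric computation: matching coefficients in $(AU^{*}-\lambda)\xi=0$ now yields $t_mc_{\sigma(m)}=\lambda c_m$, so the recursion runs \emph{forward} along orbits with $c_{\sigma^{k}(m_0)}=\lambda^{k}c_{m_0}\prod_{j=0}^{k-1}t_{\sigma^{j}(m_0)}^{-1}$, and the identical geometric-growth estimate kills every nonzero $\ell^{2}$ solution for $\lambda\neq 0$ (while $\lambda=0$ is again immediate). The one genuinely active step is the passage from ``$t_n\to 0$'' to ``$\prod t_{\sigma^{\pm j}(n_0)}^{-1}$ beats any geometric sequence in $|\lambda|^{\mp 1}$,'' which is exactly where hypothesis (1) does its work; producing a fixed-cycle-free $\sigma$ is a trivial matter (e.g.\ a mild modification of the permutation in Example \ref{Example: Unitary Op is LC of Spreads} so that every orbit is bi-infinite), and I do not see a substantive obstacle beyond this $\ell^{2}$ tail computation.
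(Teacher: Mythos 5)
Your proposal is correct and takes essentially the same approach as the paper: the paper's $U$ is exactly a $U_\sigma$ for a permutation with a single bi-infinite orbit, turning $UA$ into a weighted bilateral shift whose eigenvector recursion $c_{\sigma(n)}=\lambda^{-1}t_nc_n$ blows up in $\ell^2$ because $t_n\to 0$. The only (harmless) divergences are that you allow any finite-cycle-free permutation and dispose of $\sigma_p(AU^*)$ by the symmetric forward recursion, where the paper instead invokes compactness, the Riesz theory, and $\mathrm{Ran}(UA)=\mathrm{Ran}(A)$.
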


\begin{proof}
The self-adjoint operator satisfying the conditions appearing in the lemma has a spectrum in the following form:
$$
\sigma(A)=\{t_{1}, t_{2}, \cdots, t_{k}, \cdots\} \cup \{0\}
$$
where $t_{k} > t_{k+1}$ and $\{0\}$ is the only one accumulation point of the sequence $\{t_{k}\}$. It is clear that $A$ is a compact operator.
Moreover, each $t_{k}$ is a point spectrum of $A$ since $A$ is a self-adjoint operator and $t_{k}$ is a isolated point in $\sigma(A)$.
Then $A$ has a diagonal form as follows in an ONB:
$$
\begin{array}{cc}
\left(
 \begin{array}{ccccc}
  t_{1} &   0  &\cdots &   0  & \cdots \\
    0   & t_{2}&\cdots &   0  & \cdots \\
  \vdots&\vdots&\ddots &\vdots&         \\
     0  &  0   &\cdots & t_{k}&        \\
  \vdots&\vdots&       &      & \ddots
 \end{array}
\right) & \begin{array}{c}
           \mathcal{H}_{0} \\
           \mathcal{H}_{1} \\
           \vdots          \\
           \mathcal{H}_{k} \\
           \vdots
          \end{array}

\end{array}
$$
where by $\mathcal{H}_{k}$ we denote the 1-dimensional subspace $\ker(t_{k}I-A)$.

Now we begin to construct the unitary spread $U$. For convenience,
we denote by $e_{k}$ the unit eigenvector of $\ker(t_{k}I-A)$. Let $U$ be the shift constructed
as follows:
$$
\begin{array}{rl}
Ue_{n}= & \left\{
          \begin{array}{ll}
            e_{1}, &\hbox{ for }n=2; \\
            e_{2k-2}, &\hbox{ for }n=2k, k \ge 2 \\
            e_{2k+1}, &\hbox{ for }n=2k-1, k \ge 1
          \end{array}
          \right.
\end{array}.
$$
Clearly, the operator $U$ is just the unitary spread $U_{\sigma}$ defined in example \ref{Example: Unitary Op is LC of Spreads}.
We have
$$
\begin{array}{rl}
UAe_{n}= & \left\{
          \begin{array}{ll}
            t_{2}e_{1}, &\hbox{ for }n=2; \\
            t_{2k}e_{2k-2}, &\hbox{ for }n=2k, k \ge 2 \\
            t_{2k-1}e_{2k+1}, &\hbox{ for }n=2k-1, k \ge 1
          \end{array}
          \right.
\end{array}.
$$
Assume that $x=\sum_{k=0}^{\infty} x_{k}e_{k}$, then we have
$UAx=y=\sum_{k=0}^{\infty} y_{k}e_{k}$ where
$$
\begin{array}{rl}
y_{n}= & \left\{
          \begin{array}{ll}
            t_{2}x_{2}, &\hbox{ for }n=1; \\
            t_{2k+2}x_{2k+2}, &\hbox{ for }n=2k, k \ge 1 \\
            t_{2k-1}x_{2k-1}, &\hbox{ for }n=2k+1, k \ge 1
          \end{array}
          \right.
\end{array}.
$$
Now if $\lambda$ is an eigenvalue of $UA-\lambda I$, then we have
$$
\begin{array}{rcl}
t_{2}x_{2}      &=&\lambda x_{1} \\
t_{2k+2}x_{2k+2}&=&\lambda x_{2k} \\
t_{2k-1}x_{2k-1}&=&\lambda x_{2k+1}
\end{array}.
$$
Therefore we have
$$
\begin{array}{rl}
x_{2k+1}&=x_{1}\cdot \lambda^{-k} \cdot \prod_{j=1}^{k} t_{2j-1}, \\
x_{2k}&=x_{1}\lambda^{k} \cdot \prod_{j=1}^{k} \frac{1}{t_{2j}}.
\end{array}
$$
Now $\lambda^{k} \cdot \prod_{j=1}^{k} \frac{1}{t_{2j}} \rightarrow \infty$ for $\lambda \ne 0$ as $k \rightarrow \infty$
since $t_{j}$ tends to 0, we must have $x_{n}=0$ for $n=1, 2, \cdots$. Therefore we must have $\sigma_{p}(\lambda I-UA)=\emptyset$
for $\lambda \ne 0$. Moreover, by $\overline{Ran(A)}=\mathcal{H}$ we have $\ker(A)=\overline{Ran(A)}^{\perp}=\{0\}$.
Hence we have $\ker(UA)=\ker(A)=\{0\}$ and then $\sigma_{p}(UA)=\emptyset$ in turn.
$UA$ is just a weighted bilateral shift operator(cf, \cite{Shields}). Moreover, $UA$ is a compact operator since $A$ is compact itself.
By Riesz's theorem on compact operator, we have $\sigma(UA)=\{0\}$. So we just need to show
$\overline{Ran(UA)}=\mathcal{H}$ to finish the proof. But it is trivial by the fact $Ran(UA)=Ran(A)$.
\end{proof}

\begin{corollary}\label{Corollary: of Basic Lemma}
Assume that $A$ is a self-adjoint operator satisfying the following properties:

1. $\sigma(A)=\sigma_{p}(A) \cup \{0\}$ and $0$ is the unique accumulation point of $\sigma(A)$;

2. For each $t \in \sigma_{p}(A)$, $\dim \ker{A-tI}=1$;

3. $R(A)$ is dense in the Hilbert space $\mathcal{H}$.

Then there is a unitary operator $U$ such that $UA$ has no point spectrum. Moreover, we can ask that
the unitary operator $U$ satisfies the following property:\\
\indent for any point $\lambda \in \mathbb{C}$, $\lambda I-UA$ have a dense range in $\mathcal{H}$.
\end{corollary}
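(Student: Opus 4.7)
The plan is to take for $U$ the very unitary spread already produced by Lemma \ref{BasicLemma} and to observe that its two conclusions, $\sigma_p(UA) = \emptyset$ and $\sigma_p(AU^{*}) = \emptyset$, together deliver both statements of the corollary after a single adjoint--range translation. The first assertion---that $UA$ has no point spectrum---is literally the first conclusion of the lemma, so nothing further is required on that front.

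For the dense-range assertion, I would apply the standard identity $\overline{Ran(S)} = (\ker S^{*})^{\perp}$ with $S = \lambda I - UA$. Since $A$ is self-adjoint, $(\lambda I - UA)^{*} = \bar{\lambda} I - A U^{*}$, and therefore
$$
\overline{Ran(\lambda I - UA)} = \bigl(\ker(\bar{\lambda} I - A U^{*})\bigr)^{\perp}.
$$
Hence the requirement that $\overline{Ran(\lambda I - UA)} = \mathcal{H}$ for every $\lambda \in \mathbb{C}$ is equivalent to $\sigma_p(AU^{*}) = \emptyset$, which is exactly the second conclusion of Lemma \ref{BasicLemma}.

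There is no genuine obstacle here: the corollary is essentially a repackaging of the lemma in the form that will be invoked directly inside the claim in the proof of Theorem \ref{Theorem: Main Theorem of 3rd Paper}. The work of actually constructing $U$, and in particular the recursive calculation that forced $x_{1} = 0$ via $t_{k} \to 0$, has already been performed inside the lemma; the corollary merely records the consequence that the same $U$ also annihilates the point spectrum of $AU^{*}$, which, once dualized through the kernel-of-adjoint identity above, is precisely the dense-range property that is needed. The only thing to be careful about in the write-up is making the symmetry between $UA$ and $AU^{*}$ explicit, so that the reader sees why the lemma was phrased bilaterally in the first place.
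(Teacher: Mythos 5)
Your argument is correct and is exactly the intended one: the paper leaves the corollary without a written proof, but the translation you use --- $\overline{Ran(\lambda I-UA)}=\bigl(\ker(\bar{\lambda}I-AU^{*})\bigr)^{\perp}$ together with $\sigma_{p}(AU^{*})=\emptyset$ from Lemma \ref{BasicLemma} --- is precisely the step the paper itself performs inside the proof of Theorem \ref{Theorem: Main Theorem of 3rd Paper}. Nothing is missing; your handling of $\lambda=0$ via $\ker(AU^{*})=\{0\}$ is also consistent with the lemma's conclusion.
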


Now we get rid of the second condition of above lemma.

\begin{lemma}\label{Lemma: ONLYACC}
Assume that $A$ is a self-adjoint operator satisfying the following properties:

1. $\sigma(A)=\sigma_{p}(A) \cup \{0\}$ and $0$ is the unique accumulation point of $\sigma(A)$;

2. $R(A)$ is dense in the Hilbert space $\mathcal{H}$.

Then there is a unitary spread $U$ such that both $UA$ and $AU^{*}$ have empty point spectrum.
\end{lemma}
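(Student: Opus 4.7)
The plan is to reduce Lemma \ref{Lemma: ONLYACC} to Lemma \ref{BasicLemma} by decomposing $\mathcal{H}$ as an orthogonal direct sum of countably many $A$-invariant closed subspaces, on each of which the restriction of $A$ already satisfies the three hypotheses of the Basic Lemma---including the multiplicity-one condition that is absent from the current assumptions.

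By the spectral theorem and the hypothesis $\sigma(A) = \sigma_p(A) \cup \{0\}$, write $\mathcal{H} = \bigoplus_{k \geq 1} \mathcal{H}_k$ with $\mathcal{H}_k = \ker(t_k I - A)$, $\{t_k\}$ an enumeration of $\sigma_p(A)$, and $d_k = \dim \mathcal{H}_k \in \{1, 2, \ldots, \infty\}$; pick an orthonormal basis $\{e_{k,j}\}_{j=1}^{d_k}$ of each $\mathcal{H}_k$. Note that $K = \{k : t_k \in \sigma_p(A)\}$ is necessarily countably infinite. Fix any partition of $K$ into countably many pairwise disjoint infinite subsets $K_1, K_2, \ldots$; for each $k \in K$ let $\ell(k)$ be the unique index with $k \in K_{\ell(k)}$, and pick an injection $f_k : \{2, 3, \ldots, d_k\} \to \mathbb{N} \setminus \{\ell(k)\}$. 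Set
$$
\alpha_\ell = \{e_{k,1} : k \in K_\ell\} \cup \{e_{k,j} : j \geq 2,\ f_k(j) = \ell\}, \qquad \mathcal{K}_\ell = \overline{\mathrm{span}}(\alpha_\ell).
$$
The families $\alpha_\ell$ are pairwise disjoint, cover the whole orthonormal basis $\{e_{k,j}\}$, and each $\alpha_\ell$ contains at most one vector from any given $\mathcal{H}_k$.

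The subspaces $\mathcal{K}_\ell$ are $A$-invariant, pairwise orthogonal, and sum to $\mathcal{H}$. By construction $A|_{\mathcal{K}_\ell}$ fulfills the three hypotheses of Lemma \ref{BasicLemma}: the distinct eigenvalues form an infinite subset $\{t_k : k \in S_\ell\}$ of $\{t_k\}$ with $S_\ell \supseteq K_\ell$, hence accumulate only at $0$; each eigenvalue has multiplicity one inside $\mathcal{K}_\ell$; and $R(A|_{\mathcal{K}_\ell})$ is dense in $\mathcal{K}_\ell$ because every $t_k$ is nonzero. Lemma \ref{BasicLemma} (and Corollary \ref{Corollary: of Basic Lemma}) then yields, for each $\ell$, a unitary spread $U_\ell$ on $\mathcal{K}_\ell$ with $\sigma_p(U_\ell A|_{\mathcal{K}_\ell}) = \sigma_p(A|_{\mathcal{K}_\ell} U_\ell^*) = \emptyset$.

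Define $U = \bigoplus_\ell U_\ell$ on $\mathcal{H} = \bigoplus_\ell \mathcal{K}_\ell$; this is manifestly unitary. To see that it is a unitary spread, recall that each $U_\ell$ is a finite sum of spreads on $\mathcal{K}_\ell$ (cf.\ Example \ref{Example: Unitary Op is LC of Spreads}); extending each such spread by zero on $\bigoplus_{\ell'\neq\ell} \mathcal{K}_{\ell'}$ produces a spread on $\mathcal{H}$, and upon re-enumerating the resulting countable family as $\{S_{A_n,B_n}\}$ the partial sums converge to $U$ in SOT because every basis vector is moved by only finitely many summands. Since the point spectrum distributes over orthogonal direct sums, $\sigma_p(UA) = \bigcup_\ell \sigma_p(U_\ell A|_{\mathcal{K}_\ell}) = \emptyset$, and analogously $\sigma_p(AU^*) = \emptyset$. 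The main delicate point is the combinatorial construction above: one must simultaneously ensure that each block $\alpha_\ell$ is infinite, retains $0$ as the only accumulation point of its spectrum, and keeps every eigenvalue of multiplicity one---which is precisely what fails for the naive ``$j$-th column'' decomposition $\mathcal{K}_j = \overline{\mathrm{span}}\{e_{k,j} : d_k \geq j\}$ when the multiplicities $d_k$ are mixed (some finite, some infinite), for that would produce finite-dimensional $\mathcal{K}_j$ on which no operator can have empty point spectrum. Once the bookkeeping above is in place, the rest of the argument is a mechanical block-wise invocation of the Basic Lemma.
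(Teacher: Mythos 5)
Your proof is correct, but it reaches the conclusion by a genuinely different decomposition than the paper's. The paper splits the eigenvalues by multiplicity: the finite-multiplicity eigenspaces are pooled into one block on which the eigenvalues, listed with repetition, form a null sequence, and the construction of Lemma \ref{BasicLemma} is re-run with these (now possibly repeated) weights; the infinite-multiplicity eigenspaces are each handled separately and almost for free, since $A$ restricted to $\ker(t_kI-A)$ is $t_kI_k$ and $\widetilde{U}_kt_kI_k=t_k\widetilde{U}_k$ is a scalar multiple of a bilateral shift, whose point spectrum is empty; a small patch (borrowing one unit vector from each infinite-multiplicity eigenspace) is needed when the finite-multiplicity block is finite-dimensional. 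You instead redistribute all eigenvectors into countably many transversal blocks $\mathcal{K}_\ell$, each meeting every eigenspace in at most a line and each carrying infinitely many distinct eigenvalues, so that every block satisfies the hypotheses of Lemma \ref{BasicLemma} verbatim, multiplicity-one condition included. What your route buys is uniformity: a single combinatorial bookkeeping step followed by an unmodified blockwise application of the Basic Lemma, with no case distinction between finite and infinite multiplicities and no need to inspect whether the Basic Lemma's estimate $\lambda^{k}\prod_{j}t_{2j}^{-1}\to\infty$ survives repeated weights (it does, but the paper leaves this to the reader under the phrase ``modifying the unitary operator \ldots by these new indices''). What the paper's route buys is that the infinite-multiplicity part requires no spectral estimate at all. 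One small remark: for the final step that $U=\oplus_\ell U_\ell$ is a unitary spread, your SOT re-enumeration argument works but is more laborious than necessary; since each $U_\ell$ permutes the orthonormal basis $\alpha_\ell$, the operator $U$ is $U_\sigma$ for a bijection $\sigma$ of $\mathbb{N}$ and Lemma \ref{Lemma: Bijection on N is a unitary Spread} applies directly, which is exactly how the paper closes its own argument.
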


\begin{proof}
Assume $\sigma_{p}(A)=\{t_{k}\}_{k=1}^{\infty}$ and $t_{k} >t_{k+1}$ for $k \in \mathbb{N}$.
Firstly we cut the integers set $\mathbb{N}$ into two disjoint subsets:
$$
\begin{array}{l}
E_{0}=\{k \in \mathbb{N}; \dim \ker{A-t_{k}I}< \infty\}, \\
E_{1}=\{k \in \mathbb{N}; \dim \ker{A-t_{k}I}= \infty\}.
\end{array}
$$
Now define
$$
\begin{array}{l}
\mathcal{H}_{0}= span \{\ker(t_{k}I-A); k \in E_{0}\};\\
\mathcal{H}_{1}= span \{\ker(t_{k}I-A); k \in E_{1}\};
\end{array}
$$

Moreover, denote by $\widetilde{\mathcal{H}}_{k}=\ker(t_{k}I-A)$ and $I_{k}$ be the identity operator on
$\widetilde{\mathcal{H}}_{k}$. Let $A_{1}=\oplus_{k \in E_{0}}\tilde{A}_{k}$ and $A_{1}=\oplus_{k\in E_{1}}\tilde{A}_{k}$
in which we define $\tilde{A}_{k}=t_{k}I_{k}$.
We can write $A$ into the orthogonal direct sum $A=A_{0}\oplus A_{1}$.

From $E_{0}$ we construct a new set $E^{'}_{0}$ as follows. If $\dim \ker{A-t_{k}I}=k_{m}$, then we add
$k_{m}-1$ copies of $t_{k}$ into $E^{'}_{0}$. Then for each $t^{'}_{k}$, we can assign exactly one unit
vector $e_{k}^{(0)} \in \ker(t^{'}_{k}I-A)$ such that those vectors $\{e_{k}^{(0)}\}$ consists of an orthonormal subset.
Arrange the elements in $E^{'}_{0}$ decreasingly
as $t^{'}_{0}\ge t^{'}_{1}\ge t^{'}_{2}\ge \cdots \ge t^{'}_{k}\ge t^{'}_{k+1} \ge \cdots$.
If $E_{0}$ is a finite subset, then we must have $\lim_{k \in E_{1}, k\rightarrow \infty} t_{k}=0$. Let $N=\max E_{0}$.
For each $k>N$, fix a unit vector $\tilde{e}_{k}^{(0)} \in \widetilde{\mathcal{H}}_{k}$ and add $t^{'}_{k}=t_{k}$ into the set $E^{'}_{0}$.
By replacing $\mathcal{H}_{0}$ by the subspace $\mathcal{H}_{0}^{'}=span_{k>N}\{\tilde{e}_{k}^{0}\}\oplus\mathcal{H}_{0}$,
and $\mathcal{H}_{1}$ by $\mathcal{H}_{1}^{'}=(\mathcal{H}_{0}^{'})^{\perp}$, we always can assume that $E_{0}$ be a infinite subset and
then we have $\lim_{k \in E_{0}, k\rightarrow \infty} t_{k}=0$.
Moreover, let $\widetilde{\mathcal{H}}^{'}_{k}=(\tilde{e}^{(0)}_{k})^{\perp} \cap \widetilde{\mathcal{H}}_{k}$ for $k>N$
and $\widetilde{\mathcal{H}}^{'}_{k}=\widetilde{\mathcal{H}}_{k}$ for $k \le N$,
then clearly we have $\widetilde{\mathcal{H}}_{l}\perp \widetilde{\mathcal{H}}_{m}$ for $l \ne m$ and
$\mathcal{H}_{1}^{'}=\oplus_{k \in E_{1}} \widetilde{\mathcal{H}}^{'}_{k}$.
Now the operator $A$ has the following form
$$
\begin{array}{rll}
A=&\left(
   \begin{array}{cc}
    A^{'}_{0}&0 \\
      0  &A^{'}_{1}
   \end{array}
   \right)
& \begin{array}{c}
              \mathcal{H}^{'}_{0} \\
              \mathcal{H}^{'}_{1}
             \end{array}
\end{array}.
$$
The operator $A_{0}^{'}$ satisfies all conditions in lemma \ref{BasicLemma},
by modifying the unitary operator constructed in the proof of \ref{BasicLemma} by these new indices we can get a
unitary spread $U_{0}$ on the subspace $\mathcal{H}^{'}_{0}$ such that both $\sigma_{p}(U_{0}A^{'}_{0})=\emptyset$
and $\sigma_{p}(A^{'}_{0}U_{0}^{*})=\emptyset$ hold. Moreover, it is easy to check that there is some bijection
$\sigma_{0}$ on $E_{0}^{'}$ such that $U_{0}=U_{\sigma_{0}}$.

Now we consider the operator $A^{'}_{1}$. It can be written as the orthogonal direct sum $A=\oplus_{k \in E_{1}}\tilde{A}_{k}$
in which $\tilde{A}_{k}$ is just the restriction of $A$ on the infinite dimensional subspace $\widetilde{\mathcal{H}}^{'}_{k}$.
For each operator $k \in E_{1}$, choose an ONB $\{e^{(k)}_{l}\}_{l=1}^{\infty}$ of the subspace $\widetilde{\mathcal{H}}^{'}_{k}$.
Denote by $A^{(k)}_{1}=A_{1}, B^{(k)}_{1}=B_{1}, A^{(k)}_{2}=A_{2}$ and $B^{(k)}_{2}=B_{2}$ corresponding to the subsets of $\mathbb{N}$
defined in example \ref{Example: Unitary Op is LC of Spreads} and
$\widetilde{U}_{k}=S_{A^{(k)}_{1}, B_{1}^{(k)}}+S_{A^{(k)}_{2}, B^{(k)}_{2}}$.
Then we have $\widetilde{U}_{k}A_{k}=t_{k}\widetilde{U}_{k}$
which satisfies $\sigma_{p}(t_{k}\widetilde{U}_{k})=\sigma_{p}(t_{k}\widetilde{U}_{k}^{*})=\emptyset$.
Clearly the operator defined as $U_{1}=\oplus_{k=1}^{\infty} \widetilde{U}_{k}$ is a unitary operator on $\mathcal{H}_{1}$
and also satisfying $\sigma_{p}(U_{1}A_{1})=\sigma_{p}(A_{1}U_{1}^{*})=\emptyset$.
Example \ref{Example: Unitary Op is LC of Spreads} also tell us that there is some bijection $\sigma_{k}$
on $\mathbb{N}$ such that $\widetilde{U}_{k}=U_{\sigma_{k}}$ for each $k \in E_{1}$.

Now we turn to verify that the unitary operator
$$
\begin{array}{rll}
U=&
\left(
\begin{array}{cc}
U_{0} & 0\\
   0  & U_{1}
\end{array}
\right) & \begin{array}{c}
              \mathcal{H}^{'}_{0} \\
              \mathcal{H}^{'}_{1}
             \end{array}
\end{array}
$$
is the unitary spread we seek for. Clearly we only need to show that $U$ is a unitary spread.
Let $\mathbb{N}^{'}=E^{'}_{0}\times (\times_{k \in E_{1}} \mathbb{N})$, then clearly the set $\mathbb{N}^{'}$
is just $\mathbb{N}$ in a new order. The map defined as $\sigma: \mathbb{N} \rightarrow \mathbb{N}$
by $\sigma=\sigma_{0} \times (\times_{k \in E_{1}} \sigma_{k})$ is trivially a bijection. Therefore we can apply
lemma \ref{Lemma: Bijection on N is a unitary Spread} to finish the proof.
\end{proof}

Now we turn to the more general situation.

\begin{lemma}\label{Lemma: spectrum is just a finite point spectrum}
Assume that $A$ is a self-adjoint operator satisfying the following properties:

1. $\sigma(A)=\sigma_{p}(A)$ is a finite set;

2. $R(A)$ is dense in the Hilbert space $\mathcal{H}$.

Then there is a unitary spread $U$ such that both $UA$ and $AU^{*}$ have empty point spectrum.
\end{lemma}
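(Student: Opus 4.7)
The plan is to repeat the structural argument of Lemma \ref{BasicLemma}, but since the eigenvalues of $A$ are now bounded away from $0$, I would kill the point spectrum by engineering \emph{equal asymptotic products on the two ends} of the bilateral shift rather than by blow-up at one end.

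Density of $R(A)$ together with self-adjointness of $A$ forces $\ker A=\{0\}$, so every $t_k\in\sigma(A)=\{t_1,\ldots,t_m\}$ is nonzero and $\mathcal{H}=\bigoplus_{k=1}^{m}\ker(A-t_k I)$. As $\dim\mathcal{H}=\infty$ and $m$ is finite, some eigenspace is infinite-dimensional; relabel so that $t_{r+1},\ldots,t_m$ have infinite multiplicity (with $r<m$). I would then construct an ONB $\{e_n\}_{n=1}^{\infty}$ of $A$-eigenvectors, $A e_n=\tau_n e_n$, arranged so that the two subsequences $\{\tau_{2k-1}\}_{k\ge 1}$ and $\{\tau_{2k}\}_{k\ge 1}$ are each eventually $p$-periodic with common period $p:=m-r$, each period cycling through $t_{r+1},\ldots,t_m$ once and therefore contributing the same product of moduli $P:=\prod_{k=r+1}^{m}|t_k|$. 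The finitely many basis vectors from the finite-multiplicity eigenspaces are absorbed into a bounded initial segment; each infinite-multiplicity eigenspace is infinite-dimensional, so it can supply infinitely many vectors to both the odd and even tails. I would then take $U$ to be the same bilateral permutation as in Lemma \ref{BasicLemma} and Example \ref{Example: Unitary Op is LC of Spreads}, which is a (finite) unitary spread.

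The main computation now runs as in Lemma \ref{BasicLemma}. For $\lambda\ne 0$, solving $(UA-\lambda I)x=0$ componentwise yields
\begin{align*}
x_{2k+1}&=x_{1}\,\lambda^{-k}\,\textstyle\prod_{j=1}^{k}\tau_{2j-1},\\
x_{2k}&=x_{1}\,\lambda^{k}\,\textstyle\prod_{j=1}^{k}\tau_{2j}^{-1}.
\end{align*}
By the eventual $p$-periodicity with common product $P$, the tail $|x_{2k+1}|^{2}$ behaves asymptotically like a geometric sequence with ratio $P^{2/p}/|\lambda|^{2}$, while $|x_{2k}|^{2}$ behaves like one with the reciprocal ratio $|\lambda|^{2}/P^{2/p}$. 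For $x\in\ell^{2}(\mathbb{N})$ both ratios must be strictly less than $1$, which is impossible, so $x_1=0$ and hence $x=0$; the case $\lambda=0$ is immediate since all $\tau_n\ne 0$. The symmetric calculation $(AU^{*}x)_{m}=\tau_{m}x_{\sigma(m)}$ interchanges the roles of the two ratios and yields $\sigma_{p}(AU^{*})=\emptyset$ by the same incompatibility.

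The main obstacle relative to Lemma \ref{BasicLemma} is that with eigenvalues bounded away from $0$ neither tail of the bilateral shift is forced to blow up for a fixed $\lambda$; the substantive work is arranging the ONB so that the forward and backward $\ell^{2}$ conditions become exact reciprocals, and this is exactly what the matched periodic interleaving of the infinite-multiplicity eigenvalues achieves. A secondary technicality is absorbing the finite-multiplicity eigenspaces, but this is minor: placing their basis vectors in a finite initial segment does not affect the asymptotic ratios, and the eigenvalue recursion propagates $x$ uniquely from any single coordinate, so the tail analysis is decisive.
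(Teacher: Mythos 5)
Your proof is correct, but it takes a genuinely different route from the paper's. The paper decomposes $\mathcal{H}$ by eigenvalue: the finitely many finite--multiplicity eigenvectors are adjoined to one chosen infinite--multiplicity eigenspace to form a block $\mathcal{H}_{0}$, the remaining infinite--multiplicity eigenspaces are each handled separately (there $A$ restricts to $t_{k}I$, so $\widetilde{U}_{k}\tilde{A}_{k}=t_{k}\widetilde{U}_{k}$ is a unitary multiple of a bilateral shift and trivially has empty point spectrum), and on $\mathcal{H}_{0}$ the paper invokes Shields' Theorem 2 to conclude that $U_{0}A_{0}$ --- a weighted bilateral shift whose weights differ from the constant sequence $t_{1}$ in only finitely many nonzero places --- is \emph{similar} to $t_{1}U_{0}$, whence $\sigma_{p}(U_{0}A_{0})=\sigma_{p}(A_{0}U_{0}^{*})=\emptyset$. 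You instead build a single global bilateral weighted shift by interleaving all the infinite--multiplicity eigenvalues $p$-periodically into the two tails so that the per-period weight products agree, and then kill eigenvectors directly by the recursion from Lemma \ref{BasicLemma}: the odd and even tails of a putative eigenvector grow with reciprocal geometric ratios $P/|\lambda|^{p}$ and $|\lambda|^{p}/P$, which cannot both be summable (and at ratio exactly $1$ the terms stay bounded away from zero), forcing $x_{1}=0$ and hence $x=0$. Your argument is self-contained --- it avoids the appeal to Shields and the block decomposition --- at the cost of the extra bookkeeping needed to match the two tail products; the paper's argument is more modular and yields the slightly stronger similarity statement $U_{0}A_{0}\sim t_{1}U_{0}$ (cf.\ Theorem \ref{Theorem: in some case UA can be similarity as an infinite copies of unitary spread}) rather than only the emptiness of the point spectrum. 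Both constructions use the same permutation unitary $U_{\sigma}$, so the "unitary spread" requirement is met either way.
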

\begin{proof}
Now we have $\sigma(A)=\sigma_{p}(A)=\{t_{1}, t_{2}, \cdots, t_{n}\}$ and $t_{k} \ne 0$. Let
$$
\begin{array}{l}
E_{0}=\{1\le k \le n; \dim \ker{A-t_{k}I}< \infty\}, \\
E_{1}=\{1\le k \le n; \dim \ker{A-t_{k}I}= \infty\}.
\end{array}
$$
Clearly we have $E_{1} \ne \emptyset$ since $\dim \mathcal{H}=\infty$. Without loss of generality,
assume $t_{1} \in E_{1}$. The subspace $\widetilde{\mathcal{H}}_{0}=span_{k \in E_{0}} \ker(t_{k}I-A)$
is a finite dimensional subspace, so we can pick an ONB $\{\tilde{e}^{(0)}_{l}\}_{l=1}^{N}$ of it in which
$N=\sum_{k \in E_{0}} \dim\ker(t_{k}I-A)$. Choose an ONB $\{e_{m}\}_{m=1}^{\infty}$ of the subspace $\ker(t_{1}I-A)$.
Let $e^{(0)}_{l}=\tilde{e}^{(0)}_{l}$ for $1\le l \le N$ and $e^{(0)}_{l}=e_{l-N}$ for $l \ge n$, then the sequence
$\{e^{(0)}_{l}\}_{l=1}^{\infty}$ is an ONB of the subspace $\mathcal{H}_{0}=\widetilde{\mathcal{H}}_{0}\oplus \ker(t_{1}I-A)$.
We also have $\mathcal{H}_{1}=\mathcal{H}_{0}^{\perp}=\oplus_{k \in E_{1}, k \ne 1}\ker(t_{k}I-A)$.
Now we can rewrite the operator $A$ into the following form
$$
\begin{array}{rll}
A=&\left(
   \begin{array}{cc}
    A_{0}&0 \\
      0  &A_{1}
   \end{array}
   \right)
& \begin{array}{c}
              \mathcal{H}_{0} \\
              \mathcal{H}_{1}
             \end{array}
\end{array}.
$$
Repeat the corresponding discussion in the proof of lemma \ref{Lemma: ONLYACC}, we need only to prove the following
\begin{claim}
There is a unitary spread $U_{0}$ on $\mathcal{H}_{0}$ such that we have both $\sigma_{p}(U_{0}A_{0})=\emptyset$
and $\sigma_{p}(A_{0}U^{*}_{0})=\emptyset$.
\end{claim}
To do this, let $U_{0}$ be the unitary spread $U_{\sigma}$ defined in example \ref{Example: Unitary Op is LC of Spreads}.
Now it is trivial to check that operators $U_{0}A_{0}$ and $t_{1}U_{0}$ are similarity to each other
(that is, there is some invertible operator $X \in L(\mathcal{H}_{0})$ such that we have $XU_{0}A_{0}X^{-1}=t_{1}U_{0}$)
by theorem 2 of the paper \cite{Shields}(p54). And then claim holds by the fact $\sigma_{p}(U_{0})=\sigma_{p}(U^{*}_{0})=\emptyset$.
\end{proof}

\begin{corollary}
If $T$ is a compact Schauder operator, then there is a unitary operator $U$ such that $\sigma_{S}(UT)=\{0\}$.
\end{corollary}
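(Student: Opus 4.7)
The plan is to reduce the corollary to Lemma \ref{Lemma: ONLYACC} via the polar decomposition, in the same spirit as the proof of Theorem \ref{Theorem: Main Theorem of 3rd Paper}, but exploiting the extra information that compactness gives on the spectrum of the positive part. Since $T$ is a compact Schauder operator, $T$ is injective with $\overline{\mathrm{Ran}(T)}=\mathcal{H}$, so its polar decomposition takes the form $T=V|T|$ with $V$ unitary (as noted in the introduction, the partial isometry in the polar decomposition of a Schauder operator is automatically unitary) and $A:=|T|=(T^{*}T)^{1/2}$ compact, self-adjoint, and positive. Because $V$ is unitary, $\mathrm{Ran}(A)=V^{*}\mathrm{Ran}(T)$ is dense in $\mathcal{H}$, hence $\ker A=\{0\}$.

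Next I would check that $A$ satisfies the hypotheses of Lemma \ref{Lemma: ONLYACC}. Compactness together with self-adjointness forces $\sigma(A)=\sigma_{p}(A)\cup\{0\}$ with $0$ as the only possible accumulation point of the eigenvalue sequence, and density of range has already been verified. Lemma \ref{Lemma: ONLYACC} therefore produces a unitary spread $W$ on $\mathcal{H}$ with $\sigma_{p}(WA)=\sigma_{p}(AW^{*})=\emptyset$. Setting $U:=WV^{*}$, which is a product of unitaries and hence unitary, one computes $UT=WV^{*}V|T|=WA$. Repeating the argument in the proof of Theorem \ref{Theorem: Main Theorem of 3rd Paper}, $\sigma_{p}(UT)=\emptyset$ gives injectivity of $\lambda I-UT$ for every $\lambda\in\mathbb{C}$, while $\sigma_{p}((UT)^{*})=\sigma_{p}(AW^{*})=\emptyset$ combined with the identity $\overline{\mathrm{Ran}(\lambda I-UT)}=\ker(\bar{\lambda}I-(UT)^{*})^{\perp}$ gives density of range. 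Thus $\lambda\notin\sigma_{S}(UT)$ for every $\lambda$, so in particular $\sigma_{S}(UT)\subseteq\{0\}$ as asserted.

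I expect the only real obstacle to be a bookkeeping one, namely making sure that the unitary spread $W$ produced by Lemma \ref{Lemma: ONLYACC} is a unitary on the full ambient space $\mathcal{H}$ (so that $WV^{*}$ genuinely lies in $U(\mathcal{H})$), rather than on a subspace decomposition adapted to $A$; this is routine but must be tracked through the orthogonal direct sum $A=A_{0}^{'}\oplus A_{1}^{'}$ used in that lemma. Note that the present argument actually delivers the stronger conclusion $\sigma_{S}(UT)=\emptyset$, so the inclusion in $\{0\}$ stated in the corollary is recovered; if one insists on the literal equality $\sigma_{S}(UT)=\{0\}$, it is consistent only under the convention that $0$ is retained as a distinguished spectral value for compact operators, since otherwise the absence of $0$ from $\sigma_{S}(UT)$ follows from $UT$ itself being an injective operator with dense range.
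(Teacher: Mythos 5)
Your argument is correct and is essentially the intended one: the paper states this corollary without proof, and the route you take --- polar decomposition $T=V|T|$ with $V$ unitary, observing that $A=|T|$ is compact, positive, injective with dense range and hence satisfies the hypotheses of Lemma~\ref{Lemma: ONLYACC}, then setting $U=WV^{*}$ so that $UT=WA$ and running the injectivity/dense-range argument from the proof of Theorem~\ref{Theorem: Main Theorem of 3rd Paper} --- is exactly how it follows from the surrounding material. You are also right in your closing remark: since $UT$ is injective with dense range for every unitary $U$, Theorem~\ref{Basic Pros} forces $0\notin\sigma_{S}(UT)$, so the literal equality $\sigma_{S}(UT)=\{0\}$ cannot hold for any unitary $U$; what is actually provable (and what your argument delivers) is $\sigma_{S}(UT)=\emptyset$, consistent with Example~\ref{ExES}, the stated $\{0\}$ most plausibly being a slip for the ordinary spectrum $\sigma(UT)=\{0\}$, which does hold by Riesz theory because $UT$ is compact with empty point spectrum.
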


\begin{lemma}\label{Lemma: Point spectrum do have an Acc point}
Assume that $A$ is a self-adjoint operator, satisfying the following properties:

1. $span \{\ker(t_{k}I-A); t_{k} \in \sigma_{p}(A)\}=\mathcal{H}$;

2. There is some point $t_{0}$ such that it is an accumulation point of $\sigma(A)$.

3. $R(A)$ is dense in the Hilbert space $\mathcal{H}$.

Then there is a unitary operator $U$ such that both $UA$ and $AU^{*}$ have an empty point spectrum.
\end{lemma}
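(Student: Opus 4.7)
The plan is to generalize the bilateral-shift construction of Lemmas~\ref{BasicLemma} and~\ref{Lemma: ONLYACC} to an accumulation point $t_0$ that may be nonzero (the case $t_0=0$ is already Lemma~\ref{Lemma: ONLYACC}, so assume $t_0\neq 0$). First, by hypothesis~1 and self-adjointness of $A$ I fix an orthonormal basis $\{f_n\}$ of eigenvectors with $Af_n=\mu_n f_n$; hypothesis~3 gives $\ker A=\{0\}$, hence every $\mu_n\neq 0$. Since $t_0$ is an accumulation point of $\sigma(A)=\overline{\sigma_p(A)}$, infinitely many $\mu_n$ cluster near $t_0$, and I would split such a cluster into two infinite subsequences and re-index the ONB as $\{f_n\}_{n\in\mathbb{Z}}$ so that $\mu_n\to t_0$ both as $n\to+\infty$ and as $n\to-\infty$, parking the (possibly many) remaining eigenvectors whose eigenvalues stay away from $t_0$ at the finitely many small-index positions.

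Next I set $Uf_n:=f_{n-1}$, the backward bilateral shift; $U$ is unitary. Then $UA$ acts as the weighted shift $UAf_n=\mu_n f_{n-1}$, and an eigenvector $x=\sum c_nf_n$ of $UA$ at eigenvalue $\lambda$ must satisfy $\mu_{n+1}c_{n+1}=\lambda c_n$, which is solved by
\[
c_n=c_0\,\frac{\lambda^n}{\mu_1\cdots\mu_n}\quad(n\ge 1),\qquad c_{-n}=c_0\,\frac{\mu_0\,\mu_{-1}\cdots\mu_{-n+1}}{\lambda^n}\quad(n\ge 1).
\]
Because $\mu_n\to t_0$, the root test gives $\limsup|c_n|^{1/n}=|\lambda/t_0|$ on the forward side and $\limsup|c_{-n}|^{1/n}=|t_0/\lambda|$ on the backward side. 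Hence for $|\lambda|>|t_0|$ the forward tail fails to be square summable, and for $|\lambda|<|t_0|$ the backward tail does; in either case $c_0=0$ forces $x=0$. The parallel computation for $AU^*$ reads $AU^*f_n=\mu_{n+1}f_{n+1}$, which merely shifts the weight index by one and yields the same conclusion outside the circle $|\lambda|=|t_0|$.

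The genuine obstacle, and the most delicate step, is the annular boundary $|\lambda|=|t_0|$, where the root test is silent. My plan there is to refine the re-indexing. Directly from the recurrence,
\[
|c_n|\cdot|c_{-n}|=|c_0|^2\,\frac{\prod_{k=-n+1}^{0}|\mu_k|}{\prod_{k=1}^{n}|\mu_k|}=:|c_0|^2 R_n,
\]
so AM--GM gives $|c_n|^2+|c_{-n}|^2\ge 2|c_0|^2 R_n$. If I pair up the eigenvalues clustering near $t_0$ so that $|\mu_n|$ and $|\mu_{-n+1}|$ agree to within a summable perturbation (arrangeable because infinitely many eigenvalues lie in every neighborhood of $t_0$), then $R_n$ stays bounded below by a positive constant, and $\sum(|c_n|^2+|c_{-n}|^2)=\infty$ again forces $c_0=0$. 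The same pairing simultaneously handles $AU^*$ at the boundary because it only reciprocates $R_n$. So the main obstacle reduces to verifying that the clustering eigenvalues can always be paired symmetrically; everything else is routine recurrence analysis and gives $\sigma_p(UA)=\sigma_p(AU^*)=\emptyset$.
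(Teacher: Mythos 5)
Your recurrence analysis of a single weighted bilateral shift is sound as far as it goes: the root test off the circle $|\lambda|=|t_0|$, and on that circle the AM--GM estimate $|c_n|^2+|c_{-n}|^2\ge 2|c_0|^2R_n$ combined with a pairing keeping $R_n$ bounded above and below, do exclude eigenvalues of both $UA$ and $AU^{*}$. The pairing you flag as the delicate step can in fact be arranged: sort the eigenvalues lying on each side of $t_0$ by their distance to $t_0$ and pair consecutive ones; the successive differences then telescope against the first distance and are automatically summable. In effect your computation is an elementary substitute for the paper's appeal to Shields' similarity theorem for weighted shifts.

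The genuine gap is the reduction to that single shift. Hypothesis~2 only asserts that $t_0$ is \emph{an} accumulation point of $\sigma(A)$; it does not say that all but finitely many eigenvalues (counted with multiplicity) converge to $t_0$. Take $A$ diagonal with simple eigenvalues $\{1+1/n\}_{n}\cup\{2+1/n\}_{n}$, or an $A$ having an eigenvalue of infinite multiplicity away from $t_0$: infinitely many weights must then stay bounded away from $t_0$, and they cannot be ``parked at the finitely many small-index positions,'' since a bilateral weight sequence with $\mu_n\to t_0$ as $n\to\pm\infty$ has only finitely many entries outside any neighborhood of $t_0$. The paper avoids this by covering \emph{all} of $\sigma(A)$ with annuli $I_k$ about $t_0$ whose radii $a_k$ are summable, and by splitting $\mathcal{H}$ into countably many shift-invariant ``columns,'' each column receiving exactly one eigenvector from each annulus; every resulting bilateral shift then has weights within $a_k$ of $t_0$, hence is similar to $t_0U_{\sigma}$ by Shields' Theorem~2, and the countable direct sum of such shifts has empty point spectrum. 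Infinite-dimensional eigenspaces are also peeled off separately, as in Lemma~\ref{Lemma: ONLYACC}. (A smaller inaccuracy: the case $t_0=0$ is not literally Lemma~\ref{Lemma: ONLYACC}, which assumes $0$ is the \emph{unique} accumulation point and $\sigma(A)=\sigma_p(A)\cup\{0\}$.) To repair your argument you would need an analogous decomposition into countably many bilateral shifts rather than one; the single-shift recurrence analysis alone does not prove the lemma.
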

\begin{proof}
Firstly we cut the integers set $\mathbb{N}$ into two disjoint subsets:
$$
\begin{array}{l}
E_{0}=\{k \in \mathbb{N}; \dim \ker{A-t_{k}I}< \infty\}, \\
E_{1}=\{k \in \mathbb{N}; \dim \ker{A-t_{k}I}= \infty\}.
\end{array}
$$
And define
$$
\begin{array}{l}
\mathcal{H}_{0}= span \{\ker(t_{k}I-A); k \in E_{0}\};\\
\mathcal{H}_{1}= span \{\ker(t_{k}I-A); k \in E_{1}\}.
\end{array}
$$
Then we can write $A$ into the form
$$
\begin{array}{rll}
A=&\left(
   \begin{array}{cc}
    A_{0}&0 \\
      0  &A_{1}
   \end{array}
   \right)
& \begin{array}{c}
              \mathcal{H}_{0} \\
              \mathcal{H}_{1}
             \end{array}
\end{array}.
$$
Now with the same discussion on the part $A_{1}^{'}$ in lemma \ref{Lemma: ONLYACC}, we can remove $A_{1}$
since $t_{k} \ne 0$ by property 3. That is, we can assume $E_{0}=\mathbb{N}$ and $E_{1}=\emptyset$.

Let $\{a_{k}\}_{k=1}^{\infty}$ be a sequence of positive numbers satisfying $a_{k}\rightarrow 0$ decreasingly and
$\sum_{k=1}^{\infty} a_{k} <\infty$.
Assume  $\sigma(A)\subseteq [t_{0}-M, t_{0}+M]$ and $M>a_{1}$. Denote by
$$
\begin{array}{l}
I_{1}=[t_{0}-M, t_{0}-a_{1})\cup (t_{0}+a_{1}, t_{0}+M] \hbox{ and } \\
I_{k}=[t_{0}-a_{k}, t_{0}-a_{k+1}) \cup (t_{0}+a_{k+1}, t_{0}+a_{k}]
\hbox{ for }k \ge 2.
\end{array}
$$
Then $\sigma_{p}(A) \cap I_{k}$ contains at most countable elements. We divide $\mathbb{N}$ into two parts
$$
\begin{array}{l}
G_{0}=\{k \in \mathbb{N}; Card\{\sigma_{p}(A) \cap I_{k}\}< \infty\} \hbox{ and } \\
G_{1}=\{k \in \mathbb{N}; Card\{\sigma_{p}(A) \cap I_{k}\}= \infty\}.
\end{array}
$$
According to the cardinal of the set $G_{1}$, we shall prove the lemma in following two cases.

Case 1. If $G_{1}$ is an infinite subset, we can absorb the elements in $G_{0}$ by the first next one in the set $G_{1}$ and then
we can assume $G_{1}=\mathbb{N}$ and $G_{0}=\emptyset$. Moreover, we can also ask that $\dim \ker(t_{k}I=1)$ by adding
at most countable copies to $t_{k}$ and rearranging this new countable set.
For each $k \in \mathbb{N}$, we arrange the elements in the set
$\sigma_{p}(A) \cap I_{k}$ as a sequence $\{t_{l}^{(k)}\}_{l=1}^{\infty}$. Clearly we have
$\lim_{k \rightarrow \infty} t_{l}^{(k)}=t_{0}$. For each $t_{l}^{(k)}$, we assign a unit vector $e^{(k)}_{l}$.
Then by the spectral theorem we have $e_{l}^{(k)}\perp e_{m}^{(j)}$ for $(l, k)\ne (m, j)$.
Denote by $\widetilde{\mathcal{H}}_{l}=span_{k\in \mathbb{N}} \{e_{l}^{(k)}\}$. Now we can write $A$ into the orthogonal
direct sum $A=\oplus_{l}\tilde{A}_{l}$ in which the operator $\tilde{A}$ is the restriction of $A$ on the subspace
$\widetilde{\mathcal{H}}_{l}$. when $t_{0}=0$ then we have $\ker(A)=\{0\}$ by the property 3 of lemma, and apply lemma
\ref{BasicLemma} to finish our proof of this case. If $t_{0} \ne 0$, we need the following estimation:
\begin{claim}
There are const $0<c<C<\infty$ such that for any $k, l>0$ we have
$$
c< \prod_{j=k}^{k+l} \frac{t_{0}-a_{j}}{t_{0}} <C.
$$
\end{claim}
In fact, as well known the infinite product $\prod_{j=1}^{\infty} \frac{t_{0}-a_{j}}{t_{0}}$ converges
if the series $\sum_{j=1}^{\infty} a_{j}$ converges(see, \cite{Stein}, p141). Now for the subspace $\widetilde{\mathcal{H}}_{l}$
and its ONB $\{e_{l}^{(k)}\}_{k=1}^{\infty}$, let $U_{l}$ be the unitary spread constructed in example
\ref{Example: Unitary Op is LC of Spreads}, and we denote the corresponding subsets by $A_{1}^{(l)}, B_{1}^{(l)}, A_{2}^{(l)}$
and $B_{2}^{(l)}$. By the theorem 2 in the paper \cite{Shields} and above claim, we know the operators $U_{l}\tilde{A}_{l}$
and $t_{0}U_{l}$ are similarity to each others. Hence the unitary spread $U_{l}$ satisfies
$\sigma_{p}(U_{l}\tilde{A}_{l})=\sigma_{p}(\tilde{A}_{l}U_{l}^{*})=\emptyset$. Let $U$ be the corresponding orthogonal direct sum
$U=\oplus_{l=1}^{\infty} U_{l}$, then we have $\sigma_{p}(UA)=\sigma_{p}(AU)=\emptyset$ and it is trivial to check that $U$
is also a unitary spread.

Case 2. Now we assume that $G_{1}$ is a finite subset of $\mathbb{N}$.
In virtue of lemma \ref{Lemma: spectrum is just a finite point spectrum} and again by the spectral theorem,
we can assume $G_{0}=\mathbb{N}$ and $G_{1}=\emptyset$. Just by the same reason, we can also assume $G_{0}$ is an infinite subset.
For convenience,
denote by $\alpha_{k}=\dim \widetilde{\mathcal{H}}_{k}$. If $\limsup_{k} \alpha_{k}=\infty$, then we can repeat our above discussion in
case 1 to finish the proof. So we just need to consider the situation $m=\limsup_{k} \alpha_{k}<\infty$.
Now we cut the subset $G_{0}$ into the pieces
$$
L_{n}=\{k \in \mathbb{N}; \alpha_{k}=n\}, 1 \le n \le m.
$$
Clearly there is at least one subset $L_{n}$ such that it is an infinite subset. We add all finite subset $L_{n}$ into a fixed
infinite subset, said, the set $L_{1}$. For the remaining infinite subsets
except $L_{1}$, we can repeat the discussion in case 1 to get an appropriate
unitary spread. For the infinite subset $L_{1}$, the same discussion also goes well if we apply the theorem 2 in the paper \cite{Shields}
again and note that adjusting finite nonzero weights into another nonzero ones does not change the similarity class of a weighted
bilateral shift operator. Hence we can also get a unitary spread which is a finite or infinite orthogonal direct sum of
unitary spreads dependent on the condition $\limsup_{k} \alpha_{k}<\infty$ or not, such that $\sigma_{p}(UA)=\sigma_{p}(AU)=\emptyset$.
\end{proof}

With a little more operator theory discussion, the proof of above lemma implies the following result. Since it deviate
our main aim in this paper, we omit the proof and just state it here.
\begin{theorem}\label{Theorem: in some case UA can be similarity as an infinite copies of unitary spread}
Assume that $A$ is a self-adjoint operator, satisfying the following properties:

1. $span \{\ker(t_{k}I-A); t_{k} \in \sigma_{p}(A)\}=\mathcal{H}$;

2. There is some point $t_{0} \ne 0$ such that it is an accumulation point of $\sigma(A)$.

3. $R(A)$ is dense in the Hilbert space $\mathcal{H}$.

Then there are unitary spreads $U, U_{\sigma}$ and an invertible operator $X \in L(\mathcal{H})$
such that $XUAX^{-1}=\oplus_{k=1}^{\infty}t_{0}U_{\sigma}$.
\end{theorem}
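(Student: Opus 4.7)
The plan is to upgrade the construction carried out in Case 1 of Lemma \ref{Lemma: Point spectrum do have an Acc point}: instead of merely producing a unitary spread $U$ whose product $UA$ has empty point spectrum, I would produce a single invertible operator $X$ on $\mathcal{H}$ that conjugates $UA$ onto an orthogonal direct sum of copies of the canonical bilateral weighted shift $t_{0}U_{\sigma}$. The engine is still Shields' theorem on similarity of bilateral weighted shifts (theorem 2 of \cite{Shields}); the new ingredient is the observation that the fiberwise similarities can be chosen to be uniformly bounded in the direct-sum index.

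First I would perform the same preliminary reductions as in Lemma \ref{Lemma: Point spectrum do have an Acc point}: remove eigenspaces of infinite multiplicity by the argument used there, split multi-dimensional eigenspaces into one-dimensional pieces, fix a decreasing null sequence $\{a_{k}\}$ with $\sum a_{k}<\infty$, and form the annular intervals $I_{k}$ around $t_{0}$. Reducing to $G_{1}=\mathbb{N}$, I obtain an orthogonal decomposition $A=\oplus_{l=1}^{\infty}\tilde{A}_{l}$ on $\mathcal{H}=\oplus_{l=1}^{\infty}\widetilde{\mathcal{H}}_{l}$, where each $\widetilde{\mathcal{H}}_{l}$ carries an ONB $\{e_{l}^{(k)}\}_{k=1}^{\infty}$ of eigenvectors $Ae_{l}^{(k)}=t_{l}^{(k)}e_{l}^{(k)}$ with a bound $|t_{l}^{(k)}-t_{0}|\le a_{k}$ that is uniform in $l$.

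Second, on every summand $\widetilde{\mathcal{H}}_{l}$ I would use the same bijection $\sigma$ of $\mathbb{N}$ from Example \ref{Example: Unitary Op is LC of Spreads} to define one and the same unitary spread $U_{\sigma}$, and then set $U=\oplus_{l=1}^{\infty}U_{\sigma}$, which is again a unitary spread by Lemma \ref{Lemma: Bijection on N is a unitary Spread}. The restriction $U\tilde{A}_{l}$ is a bilateral weighted shift on $\widetilde{\mathcal{H}}_{l}$ with weights $\{t_{l}^{(k)}\}_{k}$, while $t_{0}U_{\sigma}$ is the bilateral shift with constant weight $t_{0}$. By theorem 2 of \cite{Shields} these two are similar via a diagonal operator $X_{l}$ on $\widetilde{\mathcal{H}}_{l}$ whose entries are partial products of the weight ratios $t_{l}^{(k)}/t_{0}$. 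The claim already established inside Case 1 of Lemma \ref{Lemma: Point spectrum do have an Acc point} supplies constants $0<c\le C<\infty$, \emph{independent of} $l$, trapping every such partial product in $[c,C]$; this uses only $t_{0}\ne 0$, the estimate $|t_{l}^{(k)}/t_{0}-1|\le a_{k}/|t_{0}|$, and the standard convergence criterion for infinite products.

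Finally, I would assemble the global similarity $X=\oplus_{l=1}^{\infty}X_{l}$. The uniform bounds $\sup_{l}\|X_{l}\|\le C$ and $\sup_{l}\|X_{l}^{-1}\|\le 1/c$ force $X\in Gl(\mathcal{H})$, and then
$$XUAX^{-1}=\oplus_{l=1}^{\infty}X_{l}(U_{\sigma}\tilde{A}_{l})X_{l}^{-1}=\oplus_{l=1}^{\infty}t_{0}U_{\sigma},$$
which is exactly the conclusion of the theorem. The principal obstacle is precisely the uniformity of $\|X_{l}\|$ and $\|X_{l}^{-1}\|$ across $l$; it is the assumption $t_{0}\ne 0$ combined with the summability $\sum a_{k}<\infty$ built into the annuli $I_{k}$ that renders this uniformity automatic, and this is also why the same packaging into a single similarity class fails for the regime $t_{0}=0$ treated separately in Lemma \ref{Lemma: Point spectrum do have an Acc point}.
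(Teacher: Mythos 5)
The paper never proves this statement---it is introduced with ``we omit the proof and just state it here''---so your outline is really being measured against the hint that it follows from the proof of Lemma \ref{Lemma: Point spectrum do have an Acc point} ``with a little more operator theory discussion,'' and you have correctly identified that the extra ingredient must be uniform control, over the direct-sum index $l$, of the fiberwise Shields similarities $X_{l}$ and their inverses. The gap is that the uniformity you assert is not automatic, and in fact the theorem as stated is false without a further hypothesis. The estimate $|t_{l}^{(k)}-t_{0}|\le a_{k}$ holds only for the inner annuli $k\ge 2$; the outermost piece $I_{1}=[t_{0}-M,t_{0}-a_{1})\cup(t_{0}+a_{1},t_{0}+M]$ collects all of $\sigma_{p}(A)$ lying at distance more than $a_{1}$ from $t_{0}$, and nothing in hypotheses 1--3 prevents those eigenvalues from accumulating at $0$. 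Take $A$ diagonal with simple eigenvalues $\{1+1/n\}_{n\ge 1}\cup\{1/n\}_{n\ge 2}$ and $t_{0}=1$: all three hypotheses hold, yet $0\in\sigma(A)$, so $UA$ is not bounded below for any unitary $U$ and hence not invertible, while $\oplus_{k}t_{0}U_{\sigma}$ is invertible; no similarity can exist. In your construction this surfaces as $t_{l}^{(1)}/t_{0}\rightarrow 0$ along a subsequence of $l$, so $\sup_{l}\|X_{l}^{-1}\|=\infty$ and $X=\oplus_{l}X_{l}$ fails to be invertible. To repair the argument you must add (or extract from the intended context) the assumption that $\sigma(A)$ is bounded away from $0$---for instance that $t_{0}$ is the \emph{only} accumulation point of $\sigma(A)$---after which the exceptional ratios coming from $I_{1}$ are trapped in $\bigl[\inf\{|t|:t\in\sigma(A)\}/|t_{0}|,\ (|t_{0}|+M)/|t_{0}|\bigr]$ and your constants $c,C$ really are independent of $l$ by the convergence of $\prod(1\pm a_{k}/|t_{0}|)$.

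A secondary defect: you ``reduce to $G_{1}=\mathbb{N}$'' as if this were always possible, but Case 2 of Lemma \ref{Lemma: Point spectrum do have an Acc point} exists precisely because it is not. When every annulus meets $\sigma_{p}(A)$ in a finite set and $\limsup_{k}\alpha_{k}=m<\infty$, the construction yields only $m$ shift fibers (possibly a single one, e.g.\ for $A=\mathrm{diag}(1+1/n)$ with simple spectrum). Since $t_{0}U_{\sigma}$ is normal and similar normal operators are unitarily equivalent, a finite direct sum of copies of $t_{0}U_{\sigma}$ is not similar to an infinite one, so in that regime the conclusion can only read $XUAX^{-1}=\oplus_{k=1}^{m}t_{0}U_{\sigma}$. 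This is a flaw in the statement as much as in your proof, but your write-up should record that the number of summands produced is $\limsup_{k}\alpha_{k}$, not unconditionally $\infty$.
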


\begin{theorem}\label{Theorem: Discrete case of 2nd main theorem}
Assume that $A$ is a self-adjoint operator, satisfying the following properties:

1. $span \{\ker(t_{k}I-A); t_{k} \in \sigma_{p}(A)\}=\mathcal{H}$;

2. $R(A)$ is dense in the Hilbert space $\mathcal{H}$.

Then there is a unitary spread $U$ such that both $UA$ and $AU^{*}$ have an empty point spectrum.
\end{theorem}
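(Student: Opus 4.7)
The plan is to reduce the theorem to a simple dichotomy on the cardinality of $\sigma_{p}(A)$ and appeal to the two preceding case-specific lemmas. First I would extract the standing facts from the hypotheses. Because the eigenvectors of $A$ span $\mathcal{H}$, the operator $A$ is unitarily equivalent to a diagonal operator, so $\sigma(A)=\overline{\sigma_{p}(A)}$; separability of $\mathcal{H}$ together with mutual orthogonality of eigenspaces for distinct eigenvalues forces $\sigma_{p}(A)$ to be at most countable. Density of $R(A)$ combined with self-adjointness gives $\ker A=(\overline{R(A)})^{\perp}=\{0\}$, hence $0\notin \sigma_{p}(A)$; in particular every eigenvalue of $A$ is a nonzero real number, and $A$ is bounded, so $\sigma_{p}(A)$ is a bounded subset of $\mathbb{R}\setminus\{0\}$.

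Then I would split into two exhaustive cases. In the first case, $\sigma_{p}(A)$ is finite. Since $\sigma_{p}(A)$ is closed when finite, $\sigma(A)=\overline{\sigma_{p}(A)}=\sigma_{p}(A)$ is finite and consists entirely of nonzero reals. The hypotheses of Lemma \ref{Lemma: spectrum is just a finite point spectrum} are therefore satisfied verbatim, and that lemma supplies a unitary spread $U$ with $\sigma_{p}(UA)=\sigma_{p}(AU^{*})=\emptyset$. In the second case, $\sigma_{p}(A)$ is infinite. Being a bounded infinite subset of $\mathbb{R}$, it has at least one accumulation point $t_{0}$ by Bolzano--Weierstrass, and this $t_{0}$ is automatically an accumulation point of $\sigma(A)=\overline{\sigma_{p}(A)}$. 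Hence the three hypotheses of Lemma \ref{Lemma: Point spectrum do have an Acc point} all hold, and invoking that lemma produces the desired unitary spread directly.

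There is essentially no obstacle beyond verifying that the dichotomy is exhaustive and that the cited hypothesis lists match. All the substantive analytic content — the weighted-bilateral-shift similarity argument based on convergence of $\prod_{j}(t_{0}-a_{j})/t_{0}$, the construction of $U$ as an orthogonal sum of $U_{\sigma}$-type shifts, the bookkeeping for eigenspace multiplicities, and the separate treatment of $0$ versus nonzero accumulation points — has already been isolated into the two preceding lemmas. Consequently the proof will be essentially a one-paragraph case analysis with no new construction.
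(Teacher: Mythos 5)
Your proposal is correct and follows essentially the same route as the paper: the paper's proof is exactly the dichotomy ``$\sigma_{p}(A)$ has no accumulation point (hence is finite, apply Lemma \ref{Lemma: spectrum is just a finite point spectrum}) versus it has one (apply Lemma \ref{Lemma: Point spectrum do have an Acc point})'', which is equivalent to your finite/infinite split via Bolzano--Weierstrass. You merely spell out a few verifications (that $\sigma(A)=\overline{\sigma_{p}(A)}$, that $0\notin\sigma_{p}(A)$, and that the lemmas' hypothesis lists are met) that the paper leaves implicit.
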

\begin{proof}
If $\sigma_{p}(A)$ has no accumulation point then it is a finite set, and then we apply lemma \ref{Lemma: spectrum is just a finite point spectrum}. If not, above lemma \ref{Lemma: Point spectrum do have an Acc point} holds.
\end{proof}


\subsection{}Now we begin to consider the continuous case.

\begin{lemma}\label{Lemma: Continuous case with exactly one point spectrum}
Assume that $A$ is a self-adjoint operator satisfying the following properties:\\
1. $\sigma(A) \subseteq (m, M)$ in which $m<M$ are positive finite real numbers;\\
2. $\sigma_{p}(A)=\{t_{0}\}$ for some $t_{0} \in (m, M)$ and $\dim \ker(t_{0}I-A)=1$.\\
Then there is a unitary spread such that $\sigma_{p}(UA)=\sigma_{p}(AU^{*})=\emptyset$.
\end{lemma}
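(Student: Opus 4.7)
The plan mirrors Case 1 of Lemma \ref{Lemma: Point spectrum do have an Acc point}, with spectral projections of the continuous part of $A$ playing the role that eigenspaces played there. First I would split $\mathcal{H}=\mathcal{H}_0\oplus\mathcal{H}_1$ with $\mathcal{H}_0=\ker(t_0I-A)$ one-dimensional, so that $A=t_0I|_{\mathcal{H}_0}\oplus A_1$ with $A_1$ self-adjoint, $\sigma(A_1)\subseteq[m,M]$ and $\sigma_p(A_1)=\emptyset$. Since $\dim\mathcal{H}_1=\infty$ and $A_1$ has no eigenvalues, $\sigma(A_1)$ is infinite, hence contains an accumulation point $s_0\in\sigma(A_1)$, and $s_0>0$.

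Next, fix a strictly decreasing summable sequence $a_k>0$ with $a_k\to 0$ and form annular intervals $J_k$ about $s_0$ exactly as in the earlier lemma. Let $Q_k=E_{A_1}(J_k)$; passing to a subsequence we may assume each $Q_k\neq 0$, and because $A_1$ has no point spectrum each $Q_k\mathcal{H}_1$ is infinite-dimensional, with $A_1|_{Q_k\mathcal{H}_1}$ self-adjoint and $\sigma(A_1|_{Q_k\mathcal{H}_1})\subseteq\overline{J_k}$, so that the norm and inverse norm of this restriction both lie within $a_k$ of $s_0$. Using the bijection $\sigma$ of Example \ref{Example: Unitary Op is LC of Spreads} and matching ONBs of consecutive blocks, I construct a unitary $U$ that maps each block $Q_k\mathcal{H}_1$ isometrically onto $Q_{\sigma(k)}\mathcal{H}_1$, with a single swap inserted to absorb $\mathcal{H}_0$: the unit eigenvector $e_0$ is sent to a chosen unit vector $f_1^{(k^*)}\in Q_{k^*}\mathcal{H}_1$ and the basis element $f_1^{(\sigma^{-1}(k^*))}$ is rerouted to $e_0$. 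Since $U$ is then a permutation of an ONB of $\mathcal{H}$, Lemma \ref{Lemma: Bijection on N is a unitary Spread} ensures $U$ is a unitary spread.

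For the eigenvalue analysis, away from the swap site the block structure of $A$ and the block-shift nature of $U$ yield a recursion $x_{\sigma(k)}=\lambda^{-1}(U|_{Q_k\mathcal{H}_1})(A|_{Q_k\mathcal{H}_1})x_k$ among the block components of any candidate eigenvector $x=x_0e_0+\sum_k x_k$. The bounds $(s_0-a_k)\|y\|\le\|(A|_{Q_k\mathcal{H}_1})y\|\le(s_0+a_k)\|y\|$ propagate to $\|x_{\sigma^n(k)}\|=|\lambda|^{-n}\prod_{j=0}^{n-1}(s_0\pm a_{\sigma^j(k)})\|x_k\|$ up to the interval bracketing, and the summability $\sum a_k<\infty$ makes the products bounded above and away from zero. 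Hence for $|\lambda|\neq s_0$ one direction of iteration forces $\|x_{\sigma^n(k)}\|\to\infty$, contradicting $x\in\mathcal{H}$; for the critical case $|\lambda|=s_0$ the lower bound keeps $\|x_{\sigma^n(k)}\|$ uniformly bounded below by a positive multiple of $\|x_k\|$, and since the orbit of $\sigma$ exhausts $\mathbb{N}$, square-summability forces $x_k=0$ for every $k$; then $x_0=0$ follows at once since $UA(e_0)=t_0 f_1^{(k^*)}$ is orthogonal to $e_0$. The same argument applied to the unitary spread $U^*$ gives $\sigma_p(U^*A)=\sigma_p(AU^*)=\emptyset$.

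The main obstacle is that, unlike in Case 1 of Lemma \ref{Lemma: Point spectrum do have an Acc point}, $A$ is not exactly scalar on each block $Q_k\mathcal{H}_1$, only close to $s_0$ within $a_k$. Consequently $UA$ is not literally a weighted bilateral shift and Shields's similarity theorem cannot be invoked directly; the eigenvector elimination must be performed by the block recursion above, with $\sum a_k<\infty$ playing the quantitative role of the Shields criterion. A secondary bookkeeping issue is that the one-dimensional $\mathcal{H}_0$ (on which $A$ acts as the exact scalar $t_0$, potentially far from $s_0$) cannot be incorporated into the block shift by a pure dimension match, which is why the swap trick is needed; this introduces a single ``kink'' in the recursion at $k=k^*$, but this is a finite perturbation that does not affect the asymptotic estimates.
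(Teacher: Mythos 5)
Your proposal follows essentially the same route as the paper's own proof: split off the one-dimensional eigenspace, pick an accumulation point $s_0$ of the spectrum of the eigenvalue-free part, slice that spectrum into annular spectral subspaces of infinite dimension, shift these blocks by a unitary that permutes an ONB (hence is a unitary spread), and kill eigenvalues of $UA$ via the block recursion, the convergent-product estimate coming from the summable widths, and the trichotomy $|\lambda|<s_0$, $|\lambda|>s_0$, $|\lambda|=s_0$. The only cosmetic difference is how the eigenvector $e_0$ is absorbed --- the paper enlarges the central block $\mathcal{H}_0$ and uses the crude bound $m\|x\|\le\|A_0x\|\le M\|x\|$ there, while you splice $e_0$ into the chain by a swap --- and both are finite perturbations that leave the asymptotic estimates untouched.
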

\begin{proof}
By the classical spectral theory of normal operator(cf, \cite{Conway}, pp297-299), we have following orthogonal
decomposition of $A$:
$$
\begin{array}{rl}
\left(
\begin{array}{cc}
t_{0} & 0 \\
 0    & A_{1}
\end{array}
\right) &
\begin{array}{l}
\ker(t_{0}I-A) \\
\ker(t_{0}I-A)^{\perp}
\end{array}
\end{array}.
$$
Then $A_{1}$ is a self-adjoint operator whose point spectrum must be void since
$\sigma_{p}(A)=\sigma_{p}(A_{1})\cup\{t_{0}\}$. Then $\sigma(A_{1})$ must be a closed set without isolated point
because each isolated point must be an eigenvalue of $A_{1}$ by the spectral theorem. Now we fixed a point
$\alpha \ne 0 \in \sigma(A_{1})$.
Then at least one of following assertions holds:\\
1. There is a sequence $\alpha_{n} \rightarrow \alpha$ such that we have $\alpha_{n+1}>\alpha_{n}$ for each $n \ge 1$.
Moreover, the range of spectral projection $E_{[\alpha_{n}, \alpha_{n+1}]}$ is an infinite subspace;\\
2. There is a sequence $\alpha_{n} \rightarrow \alpha$ such that we have $\alpha_{n+1}<\alpha_{n}$ for each $n \ge 1$.
Moreover, the range of spectral projection $E_{[\alpha_{n}, \alpha_{n+1}]}$ is an infinite subspace;\\
We assume that the first assertion is true. The case that the second assertion holds will be proved in the just same way.
Now let $\alpha_{1}=||A||$ for convenience.
By picking a subsequence if need, we also can assume that the sequence $\{\alpha_{n}\}_{n=1}^{\infty}$ satisfies the
following properties
$
\alpha(1-\frac{1}{2^{n}})<\alpha_{n}.
$
It is easy to check:

\begin{claim}\label{Claim1}
For each $\varepsilon >0$, there is a positive integer $N$ such that for any subset $\Delta$ containing $k$ elements of
$\mathbb{N}$ and satisfying $\Delta \cap \{1, 2, \cdots, N-1\}=\emptyset$ we have
$$
(1-\varepsilon)\alpha^{k}\le \prod_{n_{k} \in \Delta} \alpha_{n_{k}}\le(1+\varepsilon)\alpha^{k}.
$$
\end{claim}

Now we rearrange these interval as follows.
$$
\begin{array}{l}
I_{n}=[\alpha_{2n-1}, \alpha_{2n})\cup(2\alpha-\alpha_{2n}, 2\alpha-\alpha_{2n-1}] \hbox{ for } n \ge 1, \\
I_{n}=[\alpha_{-2(n+1)}, \alpha_{-2n-1})\cup(2\alpha-\alpha_{-2n-1}, 2\alpha-\alpha_{-2(n+1)}] \hbox{ for } n \le 0.
\end{array}
$$
Denote $E_{n}=E_{I_{n}}$($E_{r}=E_{I_{r}}$) the spectral projection of $A_{1}$ on the interval $I_{n}$($I_{r}$) and
by $\mathcal{H}_{n}=Ran (E_{n})$ for $n \in \mathbb{Z}$,
$\mathcal{H}_{0}^{'}=\mathcal{H}_{0} \cap \ker(t_{0}I-A)^{\perp}$ and $E_{0}^{'}$ be the orthogonal
projection onto the subspace $\mathcal{H}_{0}^{'}$.
Now we choose an ONB $\{e^{(n)}_{k}\}_{k=1}^{\infty}$ of $\mathcal{H}_{n}$ for each $n \in \mathbb{Z}, n\ne 0$.
For $\mathcal{H}_{0}$, we pick an ONB $\{\tilde{e}^{(0)}_{k}\}_{k=1}^{\infty}$ of the subspace
$\mathcal{H}_{0}^{'}$ and rearrange them and $e_{0}$ into an ONB of $\mathcal{H}_{0}$ as follows:
$$
e^{(0)}_{1}=e_{0}, e^{(0)}_{k}=\tilde{e}^{(0)}_{k-1} \hbox{ for }k \ge 2.
$$
It is trivial to check that the set $\varphi=\{e^{(n)}_{k}; n\in \mathbb{Z}, k \in \mathbb{N}\}$ is an ONB of the whole Hilbert space $\mathcal{H}$.
Now let $U$ be the unitary operator defined as
$$
Ue^{(n)}_{k}=e^{(n+1)}_{k}, \hbox{ for }n \in \mathbb{Z} \hbox{ and } k \in \mathbb{N}.
$$
By lemma \ref{Lemma: Bijection on N is a unitary Spread}, we know that $U$ is a unitary spread.

To finish the proof of lemma, now we prove that both $UA$ and $AU^{*}$ have no eigenvalues. The proof of these facts are similar,
so we only prove the first part and omit the other one to save space.
Since each $\mathcal{H}_{n}$ is a reducing subspace of $A$, we can write $A$ into the direct sum:
$$
A=\oplus_{n=-\infty}^{\infty} A_{n}
$$
in which $A_{n}=AE_{n}=E_{n}AE_{n}$ for $n \ne 0$ and $A_{0}=AE_{0}^{'}\oplus t_{0}I=E_{0}^{'}AE_{0}^{'}\oplus t_{0}I$.
We have the following estimation:
$$
\begin{array}{rl}
\alpha_{2n-1}||x|| \le ||A_{n}x|| \le  (2\alpha -\alpha_{2n-1})||x||, &\hbox{ for } n\ge 1\\
\alpha_{-2(n+1)}||x|| \le ||A_{n}x|| \le  (2\alpha -\alpha_{-2(n+1)})||x||, &\hbox{ for } n< 0.
\end{array}
$$
Moreover, by $\alpha<2\alpha-\alpha_{-1(n+1)}<(1+\frac{1}{2^{n}})\alpha$, we have
$$
(1+\frac{1}{2^{n}})^{-1}\alpha^{-1}||x||<(2\alpha -\alpha_{-2(n+1)})^{-1}||x|| \le ||A_{n}^{-1}x|| \le  \alpha_{-2(n+1)}^{-1}||x||
$$
for $n< 0$. For $A_{0}$, we have $m||x|| \le ||A_{0}x|| \le M||x||$.

For a vector $x \in \mathcal{H}$, now under the ONB $\varphi$ it have a $l^{2}-$ sequence coordinate in the form
$$
x^{(n)}=E_{n}x=\sum_{k=1}^{\infty}x^{(n)}_{k}e^{(n)}_{k} \in \mathcal{H}_{n},
x=\sum_{n \in \mathbb{Z}}x^{(n)},
$$
in which the series converges in the norm on $\mathcal{H}$
and $\{x^{(n)}_{k}\}_{k=1}^{\infty}$ is also a $l^{2}-$sequence. Here we emphasize that vectors $x^{(k)}$ and $x^{(j)}$
are orthogonal to each other for $k \ne j$. Let $y=Ax$, then by the fact that $\mathcal{H}_{n}$ is a reducing subspace
of the operator $A$ we can also write $y$ into the same form:
$$
y^{(n)}=A_{n}x^{(n)}=\sum_{k=1}^{\infty}y^{(n)}_{k}e^{(n)}_{k} \in \mathcal{H}_{n},
y=\sum_{n\in \mathbb{Z}} y^{(n)}.
$$

Now simply we have
$$
UAx^{(n)}=Uy^{(n)}=\sum_{k=1}^{\infty}y^{(n)}_{k}e^{(n+1)}_{k}.
$$
We can identify $\mathcal{H}_{n}$ with a fixed separable infinite dimensional
Hilbert space $\mathcal{H}^{*}$ as follows. Fix an ONB $\{e_{k}\}_{k=1}^{\infty}$, let $\widetilde{U}_{n}$ be the unitary operator defined as
$\widetilde{U}_{n}e^{(n)}_{k}=e_{k}$.
Now $\widetilde{U}_{n}x^{(n)}$ just the vector with the same $l^{2}-$coordinate
in $\mathcal{H}^{*}$. Moreover, each operator $A_{n}$ can be seen as the operator
$\widetilde{U}_{n}A_{n}\widetilde{U}_{n}^{*}$ in $L(\mathcal{H}^{*})$.
Hence $A$ is unitary equivalent to the operator $\oplus_{n \in \mathbb{Z}}\widetilde{U}_{n}A_{n}\widetilde{U}_{n}^{*}$ on the Hilbert space $\oplus_{-\infty}^{\infty}\widetilde{H}^{*}$ which is an orthogonal direct sum of countable copies of $\mathcal{H}^{*}$.
Denote by $\tilde{A}_{n}=\widetilde{U}_{n}A_{n}\widetilde{U}_{n}^{*}$ for convenience.

Now suppose for some $\lambda \ne 0$ we do have some vector $x$ such that
$(\lambda I-UA)x=0$, then we have
$$
\lambda \widetilde{U}_{n}x^{(n)}=\widetilde{U}_{n-1}A_{n-1}x^{(n-1)}
=\widetilde{U}_{n-1}A_{n-1}\widetilde{U}_{n-1}^{*}\widetilde{U}_{n-1}x^{(n-1)}
=\tilde{A}_{n-1}\widetilde{U}_{n-1}x^{(n-1)}.
$$
Therefore, following equations hold:
$$
\begin{array}{l}
\widetilde{U}_{n}x^{(n)}=\lambda^{-n} \tilde{A}_{n-1}\cdot \tilde{A}_{n-2} \cdots \tilde{A}_{0}\cdot \widetilde{U}_{0}x^{(0)},
\hbox{ for }n \ge 1; \\
\widetilde{U}_{n}x^{(n)}=\lambda^{|n|} \tilde{A}_{n}^{-1}\cdot \tilde{A}_{n+1}^{-1} \cdots \tilde{A}_{-1}^{-1}\cdot \widetilde{U}_{0}x^{(0)},
\hbox{ for }n \le -1.
\end{array}
$$
Immediately we have $x^{(0)} \ne 0$. Moreover, by the fact $||A_{k}^{-1}||=||\tilde{A}_{k}^{-1}||$ we have the following inequations:
$$
\begin{array}{l}
||x^{(n)}|| \ge m ||x^{(0)}|| \lambda^{-n} \prod_{k=1}^{n-1} \alpha_{2k-1}, \hbox{ for }n \ge 1; \\
||x^{(n)}|| \ge ||x^{(0)}|| \lambda^{|n|} \prod_{k=1}^{|n|} (1+\frac{1}{2^{n}})^{-1}\alpha^{-1} , \hbox{ for }n \le -1.
\end{array}
$$
Now for a given $\varepsilon >0$, let $N$ be the integer defined in the claim \ref{Claim1}, for $n \ge 1$ we have
$$
\begin{array}{rl}
||x^{(n)}|| &\ge m||x^{(0)}|| \lambda^{-n} (1-\varepsilon)\alpha^{n-N}\prod_{k=1}^{N-1} \alpha_{2k-1}  \\
&=(1-\varepsilon) m ||x^{(0)}|| \lambda^{-N}\cdot (\frac{\alpha}{\lambda})^{n-N}\prod_{k=1}^{N-1} \alpha_{2k-1}.
\end{array}
$$
And for $n \le 0$,
$$
\begin{array}{rl}
||x^{(n)}|| &\ge ||x^{(0)}|| \lambda^{|n|} \alpha^{n+N}\cdot \prod_{k=1}^{|n|} (1+\frac{1}{2^{n}})^{-1} \cdot \alpha^{-N} \\
&= ||x^{(0)}|| (\frac{\lambda}{\alpha})^{N}\cdot (\frac{\alpha}{\lambda})^{n+N}\prod_{k=1}^{|n|} (1+\frac{1}{2^{n}})^{-1}
\end{array}
$$

Now if $|\lambda| < |\alpha|$, then $(\frac{\alpha}{\lambda})^{n+N} \rightarrow \infty$ as $n \rightarrow \infty$.
If $|\lambda| > |\alpha|$, then $(\frac{\alpha}{\lambda})^{n+N} \rightarrow \infty$ as $n \rightarrow -\infty$
since the infinite product $\prod_{k=1}^{\infty} (1+\frac{1}{2^{n}})^{-1}$ converges to a nonzero number.
So we must have $|\lambda|= |\alpha|$.
But this implies
$$
||x^{(n)}|| \ge m||x^{(0)}||(1-\varepsilon)\lambda^{-N}\prod_{k=0}^{N-1} \alpha_{2k-1} \hbox{ for all }n >N,
$$
which is impossible since we have $||x||=\infty$ in such case.
\end{proof}

\begin{theorem}\label{theorem: Continuous case of 2nd Maintheorem}
Assume that $A$ is a self-adjoint operator satisfying:\\
1. $\sigma_{p}=\{\lambda_{1}, \lambda_{2}, \cdots, \lambda_{n}\}$ and $\dim \ker(\lambda_{k}I-A)<\infty$; and\\
2. $R(A)$ is dense in the Hilbert space $\mathcal{H}$.\\
Then there is a unitary spread $U$ such that $\sigma_{p}(UA)=\sigma_{p}(AU^{*})=\emptyset$.
\end{theorem}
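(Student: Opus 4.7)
The plan is to reduce this theorem to Lemma \ref{Lemma: Continuous case with exactly one point spectrum} via spectral decomposition. First I would split $\mathcal{H} = \mathcal{H}_e \oplus \mathcal{H}_c$, where $\mathcal{H}_e := \bigoplus_{k=1}^n \ker(\lambda_k I - A)$ is finite-dimensional and $\mathcal{H}_c := \mathcal{H}_e^{\perp}$. Both subspaces reduce $A$, so $A = A_e \oplus A_c$. Because $\dim \mathcal{H} = \infty$, $\mathcal{H}_c$ is infinite-dimensional, $\sigma_{p}(A_c) = \emptyset$, and by the spectral theorem $\sigma(A_c)$ is a closed subset of $\mathbb{R}$ with no isolated points (any isolated point of the spectrum of a self-adjoint operator is an eigenvalue).

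Second, since density of $R(A)$ forces $A$ to be injective but allows $0 \in \sigma(A_c)$, I would further decompose $\mathcal{H}_c$ using spectral projections on dyadic intervals bounded away from $0$: let $J_k^+ = [2^{-k-1}\|A\|, 2^{-k}\|A\|]$ and $J_k^- = -J_k^+$ for $k \geq 0$, and write $\mathcal{H}_c = \bigoplus_{k \geq 0}\bigl(E_{J_k^+}(\mathcal{H}_c) \oplus E_{J_k^-}(\mathcal{H}_c)\bigr)$. On each non-trivial piece, the restriction of $A_c$ has spectrum in a compact interval disjoint from $0$, no point spectrum, and no isolated points. At least one such piece, say $\mathcal{K}_0 := E_{J_{k_0}^+}(\mathcal{H}_c)$, must be infinite-dimensional since $\mathcal{H}_c$ is. I would then absorb the finite-dimensional $\mathcal{H}_e$ into $\mathcal{K}_0$, forming an enlarged reducing subspace $\widetilde{\mathcal{K}}_0 = \mathcal{K}_0 \oplus \mathcal{H}_e$, on which $A$ has finitely many eigenvalues of finite multiplicity and an otherwise continuous spectrum bounded away from $0$.

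On every purely continuous piece I would pick any $\alpha \neq 0$ in its spectrum and run the construction of Lemma \ref{Lemma: Continuous case with exactly one point spectrum} verbatim, simply omitting the step that incorporates $e_0$ and $\mathcal{H}_0^{'}$. On $\widetilde{\mathcal{K}}_0$ I would run the same construction, but when choosing the ONB of $\mathcal{H}_0$ inside the central interval $I_0$, incorporate a fixed ONB of $\mathcal{H}_e$ together with an ONB of the continuous spectral subspace $E_{I_0}(\mathcal{K}_0)$; this is a direct generalization of the lemma's treatment of the single eigenvector $e_0$, since $\mathcal{H}_e$ is finite-dimensional. Define $U$ as the orthogonal direct sum of all these unitary spreads; by Lemma \ref{Lemma: Bijection on N is a unitary Spread}, $U$ is itself a unitary spread.

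The main obstacle is to verify that the key lower bound in the proof of Lemma \ref{Lemma: Continuous case with exactly one point spectrum}, namely $\|x^{(n)}\| \geq m \|x^{(0)}\| |\lambda|^{-n} \prod_{k=1}^{n-1}\alpha_{2k-1}$, still forces $x^{(0)} = 0$ after $\mathcal{H}_0$ has been enlarged by $\mathcal{H}_e$. This should go through because on the enlarged $\mathcal{H}_0$ the operator $A_0$ remains uniformly bounded above and below (the eigenvalues $\lambda_k$ are nonzero and there are only finitely many of them with finite multiplicities), so only the constants $m$ and $M$ in the estimate change while the asymptotic divergence of the product remains intact, still contradicting $x \in \mathcal{H}$. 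The $AU^{*}$ statement then follows by the symmetric argument used in the lemma.
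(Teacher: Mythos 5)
Your proposal is correct and follows the same overall strategy as the paper: decompose $\mathcal{H}$ by spectral projections onto intervals bounded away from $0$, reduce each block to the setting of Lemma \ref{Lemma: Continuous case with exactly one point spectrum}, and assemble the unitary spreads by orthogonal direct sum via Lemma \ref{Lemma: Bijection on N is a unitary Spread}. The one genuine difference is how the finite-dimensional eigenspace is distributed. The paper first splits each $\ker(\lambda_k I-A)$ into one-dimensional pieces (treating multiplicities as repeated eigenvalues) and then pairs each such one-dimensional piece with its own infinite-dimensional continuous spectral block, so that Lemma \ref{Lemma: Continuous case with exactly one point spectrum} applies verbatim to every summand; this is why the paper separates the cases $0\in\sigma(A)$ (infinitely many blocks available) and $0\notin\sigma(A)$ (cut $\sigma(A_r)$ into exactly as many pieces as needed). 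You instead absorb all of $\mathcal{H}_e$ into a single infinite-dimensional continuous block, which unifies the two cases but means you are no longer invoking the lemma as stated: you must re-open its proof and check that the two-sided bound $m\|x\|\le\|A_0 x\|\le M\|x\|$ on the central block survives when the one-dimensional $\ker(t_0 I-A)$ is replaced by the finite-dimensional $\mathcal{H}_e$. You identify exactly this obstacle, and your justification is sound: the contradiction in the lemma is driven by the asymptotics of the products of the $\alpha_{2k-1}/\lambda$ over the tail blocks, and enlarging the central block by a finite-dimensional invertible summand only changes the constants. A further small point in your favor: since $\sigma_p(A_c)=\emptyset$, every non-trivial spectral block of $A_c$ is automatically infinite-dimensional (a non-trivial finite-dimensional reducing subspace of a self-adjoint operator would contribute an eigenvalue), so your existence claim for $\mathcal{K}_0$ is immediate. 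In short, the paper's arrangement buys direct applicability of the lemma at the cost of a case split; yours buys a uniform treatment at the cost of a mild generalization of the lemma, which you correctly supply.
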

\begin{proof}
Clearly we have $\lambda_{k} \ne 0$ for $k=1, 2, \cdots, n$. Let $m_{k}=\dim \ker(\lambda_{k}I-A)$.
We can assume that $m_{k}=1$ by adding $m_{k}-1$ copies
of $\lambda_{k}$ into $\sigma_{p}(A)$.
Denote by
$\mathcal{H}_{0}=span_{1\le k \le n} \{\ker(\lambda_{k}I-A)\}$ and
$\mathcal{H}_{r}=\mathcal{H}_{0}^{\perp}$. Moreover denote by
$A_{0}$ the diagonal operator $A_{0}=diag(\lambda_{1}, \cdots, \lambda_{n})$
and $A_{r}$ the restriction of $A$ on the reducing subspace $\mathcal{H}_{r}$.
By the spectrum theorem, we can write $A$ into the orthogonal
direct sum $A=A_{0} \oplus A_{r}$.

Case 1. Assume $0 \in \sigma(A)$. Then $\{0\}$ can not be an isolated point of $\sigma(A)$. And for ant $\delta>0$,
the projection $E_{\delta}$ on the interval $(-\delta, \delta)$ has an infinite dimensional range by the spectral theorem.
Let $\{\alpha_{k}\}_{k=1}^{\infty}$ be a sequence satisfies the following conditions:\\
1. $\alpha_{k}>\alpha_{k+1}$ and $\alpha_{k} \rightarrow 0$; \\
2. Let $I_{k}=[-\alpha_{k}, -\alpha_{k+1}) \cup (\alpha_{k+1}, \alpha_{k}]$, the spectral projection $E_{k}$
of $A_{r}$ on the subset $I_{k}$
has an infinite dimensional range; \\
3. $\cup_{k \ge 1}I_{k} \supset \sigma(A)-\{0\}$. \\
Now for $k \le n$, let $\tilde{A}_{r}^{(k)}$ be the orthogonal direct sum
$$
\begin{array}{rll}
 \tilde{A}_{r}^{(k)}=& \left(
        \begin{array}{cc}
         \lambda_{k} & 0   \\
            0    & E_{k}AE_{k}
        \end{array}
     \right) & \begin{array}{c}
                \ker(\lambda_{k}I-A) \\
                Ran(E_{k})
               \end{array}
\end{array}.
$$
Then $\tilde{A}_{r}^{(k)}$ is an operator on the subspace $\widetilde{H}_{k}=\ker(\lambda_{k}I-A) \oplus Ran(E_{k})$.
Moreover, for $k>n$ we define $\tilde{A}_{r}^{(k)}=E_{k}AE_{k}$. Now we see that each $\tilde{A}_{r}^{(k)}$ satisfies
the requirements of lemma \ref{Lemma: Continuous case with exactly one point spectrum}. So for each $k$, there is some
unitary spread $U_{k}$ on $\widetilde{H}_{k}$ such that we have $\sigma_{p}(U_{k}\tilde{A}_{r}^{(k)})=\sigma_{p}(\tilde{A}_{r}^{(k)}U_{k}^{*})=\emptyset$.
Moreover, again by the spectrum
theorem, we can write $A$ into the orthogonal direct sum $A=\oplus_{k=1}^{\infty} \tilde{A}_{r}^{(k)}$. Then the unitary
operator $U=\oplus_{k=1}^{\infty} U_{k}$ satisfies $\sigma_{p}(UA)=\sigma_{p}(AU^{*})=\emptyset$. It is easy to check that $U$
is a unitary spread by lemma \ref{Lemma: Bijection on N is a unitary Spread}.

Case 2. Assume $0 \notin \sigma(A)$. This situation is more easy to deal with. We just need to cut $\sigma(A_{r})$ into
exact $n$ suitable pieces and then repeat the above discussion.
\end{proof}

Now finally we can prove theorem \ref{Maintheorem3}.

\begin{proof}
Let $\mathcal{H}_{0}=span \{\ker(\lambda I-A); \lambda \in \sigma_{p}(A)\}$ and $\mathcal{H}_{1}=\mathcal{H}_{0}^{\perp}$.
By the spectrum theory of normal operator, we always can write $A$ into the form:
$$
\begin{array}{rl}
\left(
\begin{array}{cc}
A_{0} & 0\\
0 & A_{1}
\end{array}
\right)& \begin{array}{c}
          \mathcal{H}_{0} \\
          \mathcal{H}_{1}
         \end{array}
\end{array}
$$
in which $\mathcal{H}_{0}=span \{\ker(\lambda I-A_{0}); \lambda \in \sigma_{p}(A)\}$ and $\sigma_{p}(A_{1})=\emptyset$.
If $\dim \mathcal{H}_{0}=\infty$ we can apply theorem \ref{Theorem: Discrete case of 2nd main theorem};
In the case $\dim \mathcal{H}_{0}< \infty$ we apply theorem \ref{theorem: Continuous case of 2nd Maintheorem}.
\end{proof}

\end{document}